\documentclass{amsart}
\usepackage[unicode=true,bookmarks=true,bookmarksnumbered=false,bookmarksopen=false, breaklinks=false,pdfborder={0 0 1},backref=false,colorlinks=false] {hyperref}
\hypersetup{pdfauthor={Giosu{\`e} Muratore}}
\usepackage{comment}
\usepackage{enumitem}

\numberwithin{equation}{section}
\numberwithin{figure}{section}
\numberwithin{table}{section}
\theoremstyle{plain} 
\newtheorem{thm}{\protect\theoremname}[section]
\newtheorem{prop}[thm]{\protect\propositionname}
\newtheorem{cor}[thm]{\protect\corollaryname}
\newtheorem{lem}[thm]{\protect\lemmaname}
\newtheorem{conj}[thm]{\protect\conjecturename}

\theoremstyle{definition} 
\newtheorem{defn}[thm]{\protect\definitionname}
\newtheorem{example}[thm]{\protect\examplename}

\theoremstyle{remark} 
\newtheorem{rem}[thm]{\protect\remarkname}
\newtheorem*{ack}{\protect\acknowledgmentkname}

\usepackage{tikz}
\usetikzlibrary{positioning}
\tikzset{every picture/.style={line width=0.75pt}} 

\makeatother

\providecommand{\corollaryname}{Corollary}
\providecommand{\definitionname}{Definition}
\providecommand{\examplename}{Example}
\providecommand{\propositionname}{Proposition}
\providecommand{\remarkname}{Remark}
\providecommand{\theoremname}{Theorem}
\providecommand{\lemmaname}{Lemma}
\providecommand{\problemname}{Problem}
\providecommand{\acknowledgmentkname}{Acknowledgements}
\providecommand{\claimname}{Claim}
\providecommand{\conjecturename}{Conjecture}

\numberwithin{equation}{section}

\newcommand{\abs}[1]{\lvert#1\rvert}
\newcommand{\ev}{\mathrm{ev}}
\newcommand{\pol}{T}

\newcommand{\blankbox}[2]{%
  \parbox{\columnwidth}{\centering
    \setlength{\fboxsep}{0pt}%
    \fbox{\raisebox{0pt}[#2]{\hspace{#1}}}%
  }%
}

\begin{document}

\title{Relative Gromov--Witten and maximal contact conics}


\author[Muratore]{Giosu{\`e} Muratore}
\address{CEMS.UL (University of Lisbon), and  
COPELABS/DEISI (Lus\'ofona University)}
\email{muratore.g.e@gmail.com}
\urladdr{sites.google.com/view/giosue-muratore}


\subjclass[2020]{Primary 
14N35, 14N15, 14N10; Secondary 
14H50}


\keywords{Gromov--Witten, sextactic, osculating, tangent, rational curve}

\begin{abstract}
We discuss some properties of the relative Gromov--Witten invariants counting rational curves with maximal contact order at one point. We compute the number of Cayley's sextactic conics to any smooth plane curve. In particular, we compute the contribution, from double covers of inflectional lines, to a certain degree two relative Gromov--Witten invariant relative to the curve.
\end{abstract}

\maketitle

    
\section{Introduction}
Let $Y$ be a complex general smooth plane curve of degree $d\ge3$. By the classical B{\'e}zout theorem, a rational curve of degree $n$ meets $Y$ at $dn$ points. If we require that the contact order at some point is $3n$ (the maximal possible), we expect that the number of such rational curves is finite. For example, if $n=1$ these curves are the inflectional lines.

When $n=2$, they are called sextactic conics, and the relevant intersection point is called sextactic point. The main result of this paper is the following. 
\begin{thm}\label{thm:main}
    Let $Y$ be a general smooth plane curve of degree $d\ge 3$. The number of conics of contact order $6$ with $Y$ is
    $$n_d=3d(4d-9).$$
\end{thm}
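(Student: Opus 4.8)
The plan is to identify the sextactic conics with certain ramification points of a natural linear series on $Y$ and to apply the classical Plücker formula. Let $V\subseteq H^0(Y,\mathcal O_Y(2))$ be the series cut on $Y$ by plane conics, i.e.\ the image of the restriction map $H^0(\mathbb P^2,\mathcal O(2))\to H^0(Y,\mathcal O_Y(2))$. Since $Y$ is smooth of degree $d\ge 3$ it lies on no conic, so this map is injective and $V$ is base-point-free of projective dimension $5$ and degree $\deg\mathcal O_Y(2)=2d$; that is, a $g^5_{2d}$ on a curve of genus $g=\binom{d-1}{2}$. The dictionary is: a conic $Q$ meets $Y$ with contact order $\ge m$ at a point $p$ exactly when the section of $V$ determined by $Q$ vanishes to order $\ge m$ at $p$. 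Since at a general point the osculating conic (the unique conic of contact $\ge 5$) has contact exactly $5$, a conic of contact order $6$ at $p$ exists precisely when $p$ is a ramification point of $V$ whose top vanishing order is at least $6$.

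Next I would compute the total ramification. Writing $w_p(V)=\sum_i(a_i-i)$ for the ramification weight at $p$ (with $(a_0<\dots<a_5)$ the vanishing sequence), the Plücker formula for a $g^r_e$ on a curve of genus $g$ gives
$$\sum_{p\in Y}w_p(V)=(r+1)e+\binom{r+1}{2}(2g-2)=6\cdot 2d+15\,(2g-2)=12d+15\,d(d-3)=3d(5d-11),$$
using $2g-2=d(d-3)$. I would then separate two sources of ramification by a local analysis at $p$. If $p$ is not an inflection point of $Y$, every line through $p$ has contact $\le 2$, so every line pair and every double line has contact $\le 4$ at $p$; hence a conic of contact order $6$ through such a $p$ is automatically smooth, and there is at most one (the sections of $V$ vanishing to order $\ge 6$ at $p$ span a subspace of dimension $\le 1$). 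If instead $p$ is an inflection point, the tangent $\ell$ has contact $\ge 3$ and the double line $2\ell$ has contact $\ge 6$; a local expansion shows the vanishing sequence of $V$ at a general inflection point is $(0,1,2,3,4,6)$, its unique contact-$6$ conic being the \emph{degenerate} conic $2\ell$. Thus the $3d(d-2)$ inflection points of $Y$ (Plücker's formula for the line series $g^2_d$) each contribute $1$ to $\sum_p w_p(V)$ yet never a genuine conic.

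The heart of the argument, and the step I expect to be the main obstacle, is the genericity statement: for a \emph{general} smooth plane curve $Y$, (i) every ramification point of $V$ has weight exactly $1$, with vanishing sequence $(0,1,2,3,4,6)$; (ii) at each non-inflectional ramification point there is a unique conic of contact order $6$, and it is smooth; (iii) no conic has contact order $6$ with $Y$ at two distinct points. Each is a dimension count in the incidence variety of triples (smooth plane curve, point, osculating conic); claim (i) — the analogue for the conic series of ``a general curve has only simple Weierstrass points'' — is the most delicate, and a clean way to establish it is to exhibit, for each $d$, one smooth plane curve on which the non-inflectional ramification points are reduced and carry pairwise distinct smooth conics, and then conclude by upper semicontinuity in a family. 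Granting (i)--(iii), the genuine sextactic conics are in bijection with the non-inflectional ramification points of $V$, each counted once, so
$$n_d=\sum_{p\in Y}w_p(V)-\#\{\text{inflection points of }Y\}=3d(5d-11)-3d(d-2)=3d(4d-9),$$
as in Theorem~\ref{thm:main}. (For $d=3$ this recovers the $27$ classical sextactic points of a smooth plane cubic, complementary to its $9$ flexes.) One could alternatively package the count as a relative Gromov--Witten invariant of the pair $(\mathbb P^2,Y)$ in degree $2$ with one point of maximal tangency; the classical route above has the advantage of exhibiting honest reduced conics directly and isolating the inflectional locus.
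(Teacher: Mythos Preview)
Your argument is correct (modulo the genericity statements you already flag), and it is essentially Cayley's classical proof: identify sextactic points with the Weierstrass points of the $g^5_{2d}$ cut by conics, apply Pl\"ucker, and subtract the $3d(d-2)$ simple flexes where the unique contact-$6$ conic degenerates to the doubled tangent. Your arithmetic $3d(5d-11)-3d(d-2)=3d(4d-9)$ checks out.

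The paper takes a genuinely different route. Instead of a linear series on $Y$, it works with the moduli space $\overline M^Y_{(6)}(\mathbb P^2,2)$ of relative stable maps and its virtual class. The ``virtual count'' $T_{2,2}(d)=\tfrac{57}{4}d^2-\tfrac{63}{2}d$ is computed by showing it is a polynomial of degree~$2$ in $d$ (Proposition on polynomiality of relative invariants) and then interpolating from two values obtained by localization. The excess locus consists of one-dimensional components $M_l$ of double covers of each inflectional tangent $l$; the main technical work is a local computation showing each $M_l$ contributes $b_d=\tfrac{3}{4}$ to the virtual degree, whence $n_d=T_{2,2}(d)-\tfrac{3}{4}\cdot 3d(d-2)=3d(4d-9)$. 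So where you subtract $1$ per flex from the Pl\"ucker total $3d(5d-11)$, the paper subtracts $\tfrac{3}{4}$ per flex from a different virtual total; both land on the same answer.

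What each approach buys: your route is elementary, intrinsic to $Y$, and immediately produces honest reduced conics, though the simple-ramification claim~(i) really does require an argument (upper semicontinuity plus an explicit example, as you suggest, or a direct deformation-theoretic proof). The paper's route is heavier but its payoff is precisely the number $b_d=\tfrac{3}{4}$: this is the double-cover contribution to a relative Gromov--Witten invariant, and the point of the paper is that it matches the multiple-cover formula $\tfrac{1}{k^2}\binom{k(w-1)-1}{k-1}$ of Gross--Pandharipande--Siebert even when $\deg Y>3$, which motivates the conjecture for cubics in the final section. Your method does not see this invariant, and conversely would have to be rebuilt from scratch for rational cubics with maximal contact, where there is no linear series interpretation.
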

This result was first found by \cite{cayley1863sextactic}, see also \cite{MR1924719} for another proof and historical background.

Our proof uses relative Gromov--Witten invariants, and closely follows the approach of \cite{MR2134277} (where the number of conics five-fold tangent to $Y$ is computed). We proceed as follows: In Section \ref{section:explicit}, we compute explicitly some relative Gromov--Witten invariants relative to a smooth hypersurface of $\mathbb P^s$. In particular, in Equation~\eqref{eq:max_lines}, we give a closed formula for the number of lines with maximal contact with a degree $d$ hypersurface\footnote{A different version of this formula appears in \cite{bertone}.}. Finally, in Section \ref{section:sextactic}, we prove the theorem as a difference between virtual numbers of stable maps tangent to the curve $Y$. The central point of the proof is the computation of the contribution from double covers of lines to a relative Gromov--Witten invariant. In the last subsection we conjecture an analogous polynomial counting curves of degree $n=3$, using the multiple cover contribution formula of Gross, Pandharipande, and Siebert.

When $Y$ is cubic, rational curves with high contact are well understood \cite{MR2435425}. 
Moreover, $Y$ meets a maximal contact curve at just one point. This point corresponds to $3n$-torsion points of the elliptic curve $(Y,p)$, where $p$ is inflectional \cite[Lemma~1.2]{taka}. Thus, sextactic points correspond to $6$-torsion points that are not $3$-torsion. Moreover, the fact that the pair $(\mathbb P^2, Y)$ is log Calabi--Yau is used in \cite{MR1844627,MR4513164}. 
See \cite{MR4371043,MR4675044,MR4586769} for related recent results. One of the purposes of this project is to have a better comprehension of curves of maximal contact with $Y$ when $d>3$.

In the future, we plan to further expand our result either by increasing the value of $n$ (as in Conjecture~\ref{conj:n3}), or by considering curves of maximal contact to a surface $Y\subset\mathbb P^3$. 
\begin{ack}
    I thank Ethan Cotterill for introducing me to the problem. I also thank Miguel Moreira, Georg Oberdieck and Israel Vainsencher for useful discussions. The author is a member of GNSAGA (INdAM). The author is supported by FCT - Funda\c{c}\~{a}o para a Ci\^{e}ncia e a Tecnologia, under the project: UID/04561/2025.
\end{ack}
\section{Notation and background}
Let $\overline M_{0,k}(\mathbb P^s,n)$ be the moduli space of degree $n$ rational stable maps with $k$ marked point, see \cite[0.4]{FP}. We denote by $\ev_j\colon\overline M_{0,k}(\mathbb P^s,n)\rightarrow \mathbb P^s$ the $j^\mathrm{th}$ evaluation map, or simply by $\ev$ the unique evaluation map when $k=1$. Let $Y\subset\mathbb P^s$ be a smooth hypersurface of degree $d$, and $H\subset\mathbb P^s$ be a hyperplane. We denote by $\overline M_{(m)}^Y(\mathbb P^s,n)$ the moduli space of $1$-pointed relative
stable maps of degree $n$ to $\mathbb P^s$ relative to $Y$ with multiplicity $m$ at the marked point, as in Definition~1.1 and 1.18 of \cite{MR1944571}. This space can be thought as the compactification of the moduli space of maps whose contact order at the marked point is at least equal to $m$. It has expected dimension
$$
\dim(\overline M_{0,1}(\mathbb P^s,n))-m=s+n(s+1)-2-m.
$$
The virtual fundamental class, denoted by $[\overline M_{(m)}^Y(\mathbb P^s,n)]$, satisfies the following recursive relation for every $m\ge0$,
\begin{equation}\label{eq:fundamental}
        (\ev^*(Y)+m\psi)\cdot [\overline M^Y_{(m)}(\mathbb P^s, n)]=[\overline M^Y_{(m+1)}(\mathbb P^s, n)]+[D_{(m)}(\mathbb P^s, n)],
\end{equation}
where $D_{(m)}(\mathbb P^s, n)$ is a space that parameterizes reducible curves. 
We refer to \cite[Theorem 0.1]{MR1962055} for all details of Equation~\eqref{eq:fundamental}. 
When $n=1$ and $Y$ has no rational curves, there is no excess locus $D_{(m)}(\mathbb P^s,1)$. But when $n>1$, the situation is more complicated.
\begin{example}\label{example:my_example}
    Let $s=n=2$. If $\deg(Y)\ge3$, then $Y$ has no rational curves. It follows that
    $$(\ev^*(Y)+5\psi)\cdot [\overline M^Y_{(5)}(\mathbb P^2, 2)]=[\overline M^Y_{(6)}(\mathbb P^2, 2)]+[D_{(5)}(\mathbb P^2, 2)],$$
    where $[D_{(5)}(\mathbb P^2, 2)]=3[D(\mathbb P^2, (1,1),(2,3))]$ and
    $$
    D(\mathbb P^2, (1,1),(2,3))=\overline{M}_{0,3}(Y,0)\times_Y \overline{M}^Y_{(2)}(\mathbb P^2, 1)\times_Y \overline{M}^Y_{(3)}(\mathbb P^2, 1).
    $$
\end{example}
Now, we define the virtual number of maximal contact curves. In the next section we compute such a degree in some particular cases. 
\begin{defn}\label{defn:virt_num}
    Let $s,n$ be positive integers. We denote by $\pol_{s,n}\colon\mathbb{N}\rightarrow \mathbb{Q}$ the virtual number of maximal contact curves of degree $n$, that is the function
    \begin{equation}\label{eq:Tsn}
        d\mapsto \deg([\overline M_{(s-2+n(s+1))}^Y(\mathbb{P}^s,n)]),
    \end{equation}
    where $Y$ denotes a general hypersurface of degree $d$.
\end{defn}
\section{Explicit computation of some relative GW invariants}
\label{section:explicit}
\subsection{The case of lines}
For completeness, in this section we deduce a closed formula for the number of lines of maximal contact with a hypersurface $Y$.
We adopt the following convention of the binomial coefficients:
$$
\binom a i=0 \,\,\mathrm{if}\,\, a<i.
$$
Moreover, $\psi$ will always be the $\psi$-class with respect to the first mark.
\begin{lem}
Let $a,i$ be two non-negative integers. We have the following,
\begin{align}\label{first}
\int_{\overline{M}_{0,2}(\mathbb P^s,1)} \psi^{a}\ev_{1}^*(H)^{s-a+i}\ev_{2}^*(H)^{s-i}&=
(-1)^{a-i}\binom a i ,\\
\label{second}\int_{\overline{M}_{0,1}(\mathbb P^s,1)} \psi^{a}\ev_{}^*(H)^{2s-1-a} &=
(-1)^{a-s-1}\binom {a+1} s.
\end{align}
\end{lem}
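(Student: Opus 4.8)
The strategy is to make both moduli spaces completely explicit and turn each integral into a projective‑bundle computation; there is no conceptual difficulty, so the entire plan hinges on pinning down the two $\psi$-classes as honest divisors. Write $h=\ev^{*}(H)$ on $\overline M_{0,1}(\mathbb P^s,1)$ and $h_j=\ev_j^{*}(H)$ on $\overline M_{0,2}(\mathbb P^s,1)$. Since a degree‑one stable map is essentially a parametrised line, $\overline M_{0,1}(\mathbb P^s,1)$ is the incidence variety $\{(p,\ell):p\in\ell\}$ of points on lines, that is, the bundle $\mathbb P(Q)$ of lines in the fibres of the rank‑$s$ universal quotient bundle $Q$ on $\mathbb P^s$, with bundle projection $\ev$; and $\overline M_{0,2}(\mathbb P^s,1)$ is the blow‑up $b\colon\mathrm{Bl}_{\Delta}(\mathbb P^s\times\mathbb P^s)\to\mathbb P^s\times\mathbb P^s$ of the diagonal, with $b=(\ev_1,\ev_2)$ and exceptional divisor $E$ equal to the boundary locus where the two marks lie on a contracted rational component. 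The tangent line to the source at the marked point $p=[L_p]\in\ell=\mathbb P(W_\ell)$ is $\mathrm{Hom}(L_p,W_\ell/L_p)$; on $\mathbb P(Q)$ one has $L_p=\ev^{*}\mathcal O_{\mathbb P^s}(-1)$ and $W_\ell/L_p=\mathcal O_{\mathbb P(Q)}(-1)$, whence $\psi=\xi-h$ with $\xi=c_1(\mathcal O_{\mathbb P(Q)}(1))$. On $\overline M_{0,2}(\mathbb P^s,1)$ the same identification over the open locus $\mathbb P^s\times\mathbb P^s\setminus\Delta$ (where the joining line is unique and $W_\ell/L_{p_1}\cong L_{p_2}$) gives $\psi=h_2-h_1$ away from $E$; since the difference $\psi-(h_2-h_1)$ is then supported on the irreducible divisor $E$, and restricting to $E$ (where $\psi=0$, because the cotangent line at a mark of a rigid three‑pointed $\mathbb P^1$ is trivial, and $h_1=h_2$) forces that multiple to vanish, we get $\psi=h_2-h_1$ on all of $\overline M_{0,2}(\mathbb P^s,1)$.

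Granting this, \eqref{first} follows by a binomial expansion:
\[
\int_{\overline M_{0,2}(\mathbb P^s,1)}\psi^{a}\,\ev_1^{*}(H)^{s-a+i}\ev_2^{*}(H)^{s-i}
=\sum_{k=0}^{a}(-1)^{k}\binom ak\int_{\overline M_{0,2}(\mathbb P^s,1)}h_1^{\,s-a+i+k}h_2^{\,s-i+a-k}.
\]
Since $b=(\ev_1,\ev_2)$ is birational (two general points span a unique line), the projection formula gives $\int h_1^{p}h_2^{q}=1$ when $p=q=s$ and $0$ otherwise, for $p+q=2s$; hence only the term $k=a-i$ survives and equals $(-1)^{a-i}\binom a{a-i}=(-1)^{a-i}\binom ai$, the convention $\binom ai=0$ for $i>a$ handling the remaining cases.

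For \eqref{second}, substitute $\psi=\xi-h$ and push forward along $\ev$. The Euler sequence gives $c(Q)=(1-H)^{-1}$ on $\mathbb P^s$, so the Segre classes are $s_0(Q)=1$, $s_1(Q)=-H$ and $s_m(Q)=0$ for $m\ge 2$; equivalently $\ev_{*}\xi^{\,s-1}=1$, $\ev_{*}\xi^{\,s}=-H$, and $\ev_{*}\xi^{\,j}=0$ for $j\notin\{s-1,s\}$. Therefore
\[
\int_{\overline M_{0,1}(\mathbb P^s,1)}\psi^{a}h^{2s-1-a}
=\sum_{j=0}^{a}(-1)^{a-j}\binom aj\int_{\mathbb P^s}H^{\,2s-1-j}\,\ev_{*}\xi^{\,j}
=(-1)^{a-s+1}\Bigl(\binom a{s-1}+\binom as\Bigr),
\]
only $j=s-1$ and $j=s$ contributing; Pascal's identity $\binom a{s-1}+\binom as=\binom{a+1}s$ together with $(-1)^{a-s+1}=(-1)^{a-s-1}$ then gives $(-1)^{a-s-1}\binom{a+1}{s}$.

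The only genuinely delicate point is the identification of the two $\psi$-classes above: one must fix the $\mathbb P(\cdot)$-convention (sub‑bundles versus quotients) consistently with the sign of $\xi$ and with the Segre/pushforward formula, and one must really check that the exceptional divisor contributes nothing to $\psi$ on $\overline M_{0,2}(\mathbb P^s,1)$. Everything afterwards is a short binomial manipulation, and the two resulting formulas can be cross‑checked against one another through the forgetful morphism $\overline M_{0,2}(\mathbb P^s,1)\to\overline M_{0,1}(\mathbb P^s,1)$.
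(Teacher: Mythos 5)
Your proof is correct, but it takes a genuinely different route from the paper. The paper proves \eqref{first} by induction on $a+i$, playing the divisor equation against the topological recursion relation on $\overline{M}_{0,3}(\mathbb P^s,1)$ to produce the Pascal recursion $\binom{a-1}{i}+\binom{a-1}{i-1}=\binom{a}{i}$, and then deduces \eqref{second} from the case $i=s$ via the string equation; it never leaves the axiomatic framework of genus-zero Gromov--Witten theory. You instead work on explicit models of the moduli spaces: $\overline{M}_{0,2}(\mathbb P^s,1)$ as the blow-up of the diagonal in $\mathbb P^s\times\mathbb P^s$ with $\psi=\ev_2^*(H)-\ev_1^*(H)$, and $\overline{M}_{0,1}(\mathbb P^s,1)$ as the point-line incidence variety with $\psi=\xi-\ev^*(H)$, after which both integrals collapse to a binomial expansion plus a Segre-class pushforward. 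Your identifications of the $\psi$-classes check out (I verified them against several values of the stated formulas, and your argument that the correction term along the exceptional divisor vanishes is sound, since $E|_E$ is a nonzero class killed by neither $\psi$ nor $h_2-h_1$ restricting to zero there); the one point you should make explicit is that the kernel of restriction $\operatorname{Pic}(\mathrm{Bl}_\Delta)\to\operatorname{Pic}(U)$ is exactly $\mathbb Z E$, which is what lets you write the difference as $cE$. What your approach buys is a closed-form, induction-free derivation that computes all these integrals simultaneously and makes the answer geometrically transparent; what the paper's approach buys is uniformity with the recursive machinery (divisor/string/TRR) used throughout the rest of the argument, and no need to identify the moduli spaces or fix projective-bundle conventions.
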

\begin{proof}
    We prove the first equation by induction on the sum $a+i=t$. If $t=0$, then the only case we need to prove is $a=i=0$. That is, we need to prove that
    $$
    \int_{\overline{M}_{0,2}(\mathbb P^s,1)} \psi^{0}\ev_{1}^*(H)^{s}\ev_{2}^*(H)^{s}=1.
    $$
    But that is of course true since it is the number of lines passing through two general points \cite[Lemma 14]{FP}. Let us suppose that the equation holds true whenever $a+i\leq t-1$. Let $a,i$ be such that $a+i=t$. Consider the invariant
    \begin{equation}\label{invariant}
        \int_{\overline{M}_{0,3}(\mathbb P^s,1)} \psi^{a}\ev_{1}^*(H)^{s-a+i}\ev_{2}^*(H)^{s-i}\ev_{3}^*(H).
    \end{equation}
    Using the divisor equation \cite[1.2.III]{MR1685628}, we see that
    \begin{align*}
    \int_{\overline{M}_{0,3}(\mathbb P^s,1)} &\psi^{a}\ev_{1}^*(H)^{s-a+i}\ev_{2}^*(H)^{s-i}\ev_{3}^*(H)\\
          =  \int_{\overline{M}_{0,2}(\mathbb P^s,1)} &\psi^{a}\ev_{1}^*(H)^{s-a+i}\ev_{2}^*(H)^{s-i}+  \psi^{a-1}\ev_{1}^*(H)^{s-(a-1)+i}\ev_{2}^*(H)^{s-i}\\
          =  \int_{\overline{M}_{0,2}(\mathbb P^s,1)} &\psi^{a}\ev_{1}^*(H)^{s-a+i}\ev_{2}^*(H)^{s-i}+(-1)^{a-1-i}\binom{a-1}i.
    \end{align*}
    Note that we used the inductive hypothesis. On the other hand, using the topological recursion relation \cite[Equation~(6)]{MR1685628}, we see that \eqref{invariant} is equal to
    \begin{equation*}
        \int_{\overline{M}_{0,2}(\mathbb P^s,1)} \psi^{a-1}\ev_{1}^*(H)^{s-a+i}\ev_{2}^*(H)^{s-(i-1)} =  (-1)^{a-i}\binom{a-1}{i-1}.
    \end{equation*}
    We deduce immediately
    \begin{align*}
        \int_{\overline{M}_{0,2}(\mathbb P^s,1)} \psi^{a}\ev_{1}^*(H)^{s-a+i}\ev_{2}^*(H)^{s-i} &= (-1)^{a-i}\binom{a-1}i +(-1)^{a-i}\binom{a-1}{i-1}\\
        &= (-1)^{a-i}\binom{a}{i}.
    \end{align*}
    So, \eqref{first} is proved. In order to prove \eqref{second}, it is enough to take $i=s$ in \eqref{first} and apply the string equation \cite[1.2.I]{MR1685628}.
\end{proof}
Let $\mathcal{S}(n,k)$ denote the signed Stirling numbers of the first kind. More precisely, $\mathcal{S}(n,k)$ are the integers such that
$$x(x+1)(x+2)\cdots(x+n-1)=\sum_{k=0}^n(-1)^{n-k}\mathcal{S}(n,k)x^k.$$
Using Equations~\eqref{eq:fundamental} and \eqref{second}, we have
\begin{align*}
    \deg([\overline{M}^Y_{(2s-1)}(\mathbb P^s, 1)]) &= \sum_{k=0}^{2s-1} (-1)^{2s-1-k}\mathcal{S}(2s-1,k)\int_{\overline{M}_{0,1}(\mathbb P^s,1)} \psi^{2s-1-k}\ev_{}^*(H)^{k} d^k\\
    &= \sum_{k=1}^{s} (-1)^{k+1}\mathcal{S}(2s-1,k)\int_{\overline{M}_{0,1}(\mathbb P^s,1)} \psi^{2s-1-k}\ev_{}^*(H)^{k} d^k\\
    &= \sum_{k=1}^{s}(-1)^{k+1+2s-1-k-s-1}\mathcal{S}(2s-1,k)\binom{2s-k}{s}d^k.
\end{align*}
Finally,
\begin{equation}\label{eq:max_lines}
    \deg([\overline{M}^Y_{(2s-1)}(\mathbb P^s, 1)]) = (-1)^{s+1}\sum_{k=1}^{s}  \mathcal{S}(2s-1,k)\binom{2s-k}{s}d^k.
\end{equation}

\begin{example}\label{example:Infl}
    Take $s=2$ and $s=3$. We easily get
    \begin{align*}
        \mathcal{S}(3,1) &=2, & \mathcal{S}(3,2)&=-3, & & \\
        \mathcal{S}(5,1) &=24, & \mathcal{S}(5,2)&=-50, & \mathcal{S}(5,3)&=35 .
    \end{align*}
    So we have
    \begin{align*}
        \deg([\overline{M}^Y_{(3)}(\mathbb P^2, 1)]) &= 3d(d-2),\\
        \deg([\overline{M}^Y_{(5)}(\mathbb P^3, 1)]) &= 35d^3-200d^2+240d.
    \end{align*}
    Both cases are classically known, see \cite[Exercise~IV.2.3(e)]{MR0463157} and \cite[11.1.3]{MR3617981}. 
\end{example}

\subsection{The case of any degree}
When the degree of the curve is greater than $1$, a closed formula like Equation~\eqref{eq:max_lines} is not known. But the following proposition is very useful, because it allows to completely determine relative invariants after knowing a finite number of them. We denote by $p_t(d)\in\mathbb Q[d]$ a polynomial of degree at most $t$.
\begin{prop}\label{prop:polynomial}
    Let $Y\subset \mathbb P^s$ be a smooth hypersurface of degree $d$, which does not contain rational curves. For any $m\ge0$ and $n\ge1$,
    \begin{equation}\label{eq:P}
        \int_{\overline{M}_{(m+1)}^Y(\mathbb{P}^s,n)} \ev^*(Y)^k\psi^j=
        \begin{cases}
            0, & \text{if } j\neq s+n(s+1)-m-k-3,\\
            0, & \text{if } k\ge s,\\
            d^{k+1}p_{s-1-k}(d), & \text{if } 0\le k\le s-1.
        \end{cases}
    \end{equation}
    In particular, the integral in the equation is either zero or a polynomial in $d$ of degree at most $s$ where $0$ is a root of multiplicity $k+1$.
\end{prop}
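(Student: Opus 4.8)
The plan is to establish the three cases separately: the first two by elementary arguments, the third by induction on the degree $n$ via the recursion \eqref{eq:fundamental}. If $j\neq s+n(s+1)-m-k-3$ the integral vanishes for dimension reasons, since $\overline{M}^Y_{(m+1)}(\mathbb{P}^s,n)$ has dimension $s+n(s+1)-2-(m+1)=s+n(s+1)-m-3$ whereas $\ev^*(Y)^k\psi^j$ has codimension $k+j$. If $k\ge s$, then since the marked point of any map in $\overline{M}^Y_{(m+1)}(\mathbb{P}^s,n)$ has contact order $m+1\ge1$ with $Y$, the evaluation map $\ev$ factors through $Y\hookrightarrow\mathbb{P}^s$; as $\dim Y=s-1$ this forces $\ev^*(H)^s=0$, hence $\ev^*(Y)^k=d^k\ev^*(H)^k=0$. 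I henceforth assume $0\le k\le s-1$ and $j=s+n(s+1)-m-k-3$, and prove by induction on $n\ge1$ that the integral equals $d^{k+1}p_{s-1-k}(d)$.

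Since $\overline{M}^Y_{(0)}(\mathbb{P}^s,n)=\overline{M}_{0,1}(\mathbb{P}^s,n)$, an easy induction on $m$ turns \eqref{eq:fundamental} into
\begin{equation*}
[\overline{M}^Y_{(m+1)}(\mathbb{P}^s,n)]=\prod_{i=0}^{m}(\ev^*(Y)+i\psi)\cap[\overline{M}_{0,1}(\mathbb{P}^s,n)]-\sum_{l=0}^{m}\,\prod_{i=l+1}^{m}(\ev^*(Y)+i\psi)\cap[D_{(l)}(\mathbb{P}^s,n)],
\end{equation*}
so that $\int\ev^*(Y)^k\psi^j$ splits into a \emph{main term} over $\overline{M}_{0,1}(\mathbb{P}^s,n)$ and \emph{boundary terms} over the $D_{(l)}(\mathbb{P}^s,n)$. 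For the main term I would expand $\ev^*(Y)^k\psi^j\prod_{i=0}^{m}(\ev^*(Y)+i\psi)$ with $\ev^*(Y)=d\,\ev^*(H)$: the $i=0$ factor is $\ev^*(Y)$ and contributes $d\,\ev^*(H)$ with no $\psi$, while each $i\ge1$ factor contributes $d\,\ev^*(H)$ or $i\psi$, so every monomial is an integer multiple of $d^{k+1+t}\ev^*(H)^{k+1+t}\psi^{j+m-t}$ with $0\le t\le m$; integrating over $\overline{M}_{0,1}(\mathbb{P}^s,n)$ yields a number independent of $d$ which vanishes unless $k+1+t\le s$, because $\ev^*(H)^{s+1}=0$ on $\mathbb{P}^s$. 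Hence the main term is $\sum_{t=0}^{s-1-k}c_t\,d^{k+1+t}$ for some $c_t\in\mathbb{Q}$, of the asserted form; since $D_{(l)}(\mathbb{P}^s,1)=\emptyset$ for all $l$, this already settles the base case $n=1$.

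The boundary terms are the crux. By \cite[Theorem~0.1]{MR1962055}, $D_{(l)}(\mathbb{P}^s,n)$ is a sum of classes of fibre products $\overline{M}_{0,r+1}(Y,0)\times_{Y^r}\prod_{i=1}^{r}\overline{M}^Y_{(m_i)}(\mathbb{P}^s,n_i)$ with $\sum n_i=n$, $\sum m_i=l$ and $n_i\ge1$, where $r\ge2$: the unique marked point lies on a rational component contracted into $Y$ (as $Y$ contains no rational curve), and for stability this contracted part of the curve must carry at least two further branches, necessarily of positive degree. Consequently $n_i\le n-1<n$, and the inductive hypothesis applies to each $\overline{M}^Y_{(m_i)}(\mathbb{P}^s,n_i)$. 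On such a stratum the marked point sits on the central factor $\overline{M}_{0,r+1}(Y,0)\cong Y\times\overline{M}_{0,r+1}$, so the entire insertion $\ev^*(Y)^k\psi^j\prod_{i=l+1}^{m}(\ev^*(Y)+i\psi)$ is pulled back from it; by the projection formula the integral reduces to an integral over $Y$ of powers of $H|_Y$ against the $Y$-pushforwards $\ev_{i,*}(\psi^{\bullet}\cap[\overline{M}^Y_{(m_i)}(\mathbb{P}^s,n_i)])$, times a $d$-independent factor from $\overline{M}_{0,r+1}$. By induction, each such pushforward paired with $(H|_Y)^{k_i}$ equals $d\,p_{s-1-k_i}(d)$; evaluating the $Y$-integral through $\int_Y(H|_Y)^{s-1}=d$ and tracking the powers of $d$ carried by the $\ev^*(Y)$-factors, one checks that every boundary term is again $d^{k+1}\,q(d)$ with $\deg q\le s-1-k$. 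Summing over $l$ and over the strata and adding the main term gives the claim. I expect the genuine difficulty to be here: pinning down the exact shape of $D_{(l)}(\mathbb{P}^s,n)$ from \cite{MR1962055}, and performing the power-of-$d$ bookkeeping across the fibre product carefully enough to confirm that the exponent of $d$ never drops below $k+1$ nor exceeds $s$. Everything else is formal.
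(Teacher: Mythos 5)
Your proposal is correct and follows essentially the same route as the paper: both rest on Gathmann's recursion \eqref{eq:fundamental} together with the explicit description of $D_{(l)}(\mathbb P^s,n)$ (for $Y$ without rational curves) as fibre products over $Y$ of relative spaces of strictly smaller degree and multiplicity, and both extract the factor $d^{k+1}$ from the cap product $Y_{|Y}^k\cup\gamma_1\cup\cdots\cup\gamma_r$ on $Y$ while bounding the remaining degree in $d$ by the inductive hypothesis applied to the teeth. The only organizational difference is that you induct on the degree $n$ (using $n_i<n$) after unwinding the $m$-recursion all at once, whereas the paper inducts on the multiplicity $m$ (using $m_i<m$) one recursion step at a time; both inductions terminate for the same reason, and the power-of-$d$ bookkeeping you sketch is exactly the computation carried out in the paper.
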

\begin{proof}
    We prove the proposition by induction on $m$. By Equation~\eqref{eq:fundamental},
    $$
    \int_{\overline{M}_{(1)}^Y(\mathbb{P}^s,n)} \ev^*(Y)^k\psi^j =
        d^{k+1}\int_{\overline{M}_{0,1}(\mathbb P^s,n)} \ev^*(H)^{k+1}\psi^j.
    $$
    Thus Equation~\eqref{eq:P} holds, independently of $n$, for $m=0$. Suppose that $m>0$, hence
    \begin{multline*}
        \int_{\overline{M}_{(m+1)}^Y(\mathbb{P}^s,n)} \ev^*(Y)^k\psi^j =
        \int_{\overline{M}_{(m)}^Y(\mathbb{P}^s,n)} \ev^*(Y)^{k+1}\psi^j\\
        +m\int_{\overline{M}_{(m)}^Y(\mathbb{P}^s,n)} \ev^*(Y)^k\psi^{j+1}-\int_{D_{(m)}(\mathbb{P}^s,n)}\ev^*(Y)^k\psi^{j}.
    \end{multline*}
    By induction, we need to compute only the integral on the right. By the explicit formula in \cite[Remark~1.4]{MR1962055}, using that $Y$ has no rational curves, it is
    \begin{equation}\label{eq:int_D}
        \sum_{r\ge2}\int_{\overline{M}_{0,1+r}}\psi^j \sum_{\gamma_1,\ldots,\gamma_r}\left( (Y_{|Y}^k\cup \gamma_1\cup\cdots\cup\gamma_r)\frac{1}{r!}\prod_{i=1}^r m_{i}
        \int_{\overline{M}_{(m_{i})}^Y(\mathbb{P}^s,n_i)} \ev^*(\gamma_i^\vee)\right),
    \end{equation}
    where $m_1+\ldots+m_r=m$, $n_1+\ldots+n_r=n$, the $\gamma_i$ run over a basis of the part of $H^*(Y)\otimes\mathbb Q$ induced by $\mathbb P^s$, and $\gamma_i^\vee$ is the dual of $\gamma_i$ immersed in $H^*(\mathbb P^s)\otimes\mathbb Q$.

    In order to compute \eqref{eq:int_D}, we may suppose that each $\gamma_i$ has codimension $t_i$, that is $\gamma_i=H_{|Y}^{t_i}$. Thus the cap product is
    $$
    Y_{|Y}^k\cup \gamma_1\cup\cdots\cup\gamma_r = 
    d^kH_{|Y}^k\cup \gamma_1\cup\cdots\cup\gamma_r = d^{k+1}
    $$
    if and only if $\sum_{i=1}^r t_i=s-1-k$, otherwise it is zero. Since $m_i<m$ for any $i$, by induction we have
    \begin{align*}
        \prod_{i=1}^r m_{i}
        \int_{\overline{M}_{(m_{i})}^Y(\mathbb{P}^s,n_i)} \ev^*(\gamma_i^\vee) &= \prod_{i=1}^r m_{i}
        \int_{\overline{M}_{(m_{i})}^Y(\mathbb{P}^s,n_i)} \ev^*(\frac{1}{d}H^{s-t_i-1}) \\
        &= \prod_{i=1}^r \frac{m_{i}}{d^{s-t_i}}
        \int_{\overline{M}_{(m_{i})}^Y(\mathbb{P}^s,n_i)} \ev^*(Y^{s-t_i-1}) \\
        &= \prod_{i=1}^r \frac{m_{i}}{d^{s-t_i}}
        d^{s-t_i}p_{t_i}(d) \\
        &= p_{s-1-k}(d).
    \end{align*}
    Thus \eqref{eq:int_D} is either zero or $d^{k+1}p_{s-1-k}(d)$, as expected.
\end{proof}
We may apply Equation~\eqref{eq:P} to Definition~\ref{defn:virt_num}.
\begin{prop}
    There exists a polynomial $p(d)$ of degree at most $s-1$ such that, for every $d\ge 2s-1$,  $\pol_{s,n}(d)=dp(d)$.
\end{prop}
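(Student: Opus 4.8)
The plan is to obtain the statement as a direct corollary of Proposition~\ref{prop:polynomial}, after checking that its hypothesis holds for a general hypersurface of degree $d\ge 2s-1$. First I would unwind Definition~\ref{defn:virt_num}. Put $m+1=s-2+n(s+1)$, so that $\overline M^Y_{(m+1)}(\mathbb P^s,n)$ has expected dimension $0$ and
\[
\pol_{s,n}(d)=\deg\bigl([\overline M^Y_{(m+1)}(\mathbb P^s,n)]\bigr)=\int_{\overline M^Y_{(m+1)}(\mathbb P^s,n)}\ev^*(Y)^{0}\psi^{0}.
\]
This is the $k=0$, $j=0$ instance of the integral in~\eqref{eq:P}, with $m=s-3+n(s+1)$: here $j=0$ is exactly the value $s+n(s+1)-m-k-3$ forced by dimension, so the first (vanishing) alternative of~\eqref{eq:P} does not occur, and $k=0$ lies in the admissible range $0\le k\le s-1$ as soon as $s\ge1$.

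Second, I would verify the hypothesis of Proposition~\ref{prop:polynomial}, namely that the general $Y$ of degree $d$ carries no rational curves. This is the only place the bound $d\ge 2s-1$ is used. For $s=2$ it is automatic, since a smooth plane curve of degree $d\ge3$ has positive genus and hence admits no nonconstant map from $\mathbb P^1$; for $s\ge3$ it is the non-existence theorem for rational curves on a general hypersurface, which holds precisely in the range $d\ge 2s-1$, and I would simply cite it. Granting this, Proposition~\ref{prop:polynomial} applied with $k=0$ gives immediately $\pol_{s,n}(d)=d\cdot p_{s-1}(d)$ with $\deg p_{s-1}\le s-1$; taking $p:=p_{s-1}$ already yields the displayed identity with the degree bounded by $s-1$.

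The remaining point is sharpness of the degree, i.e.\ that $p$ has degree exactly $s-1$, equivalently that $\pol_{s,n}(d)$ has degree exactly $s$ in $d$. For this I would isolate the top power of $d$. Iterating~\eqref{eq:fundamental} writes $[\overline M^Y_{(m+1)}(\mathbb P^s,n)]$ as $\prod_{i=0}^{m}(\ev^*(Y)+i\psi)\cdot[\M]$ minus a sum of terms supported on the boundary loci $D_{(i)}(\mathbb P^s,n)$. Since $\ev^*(Y)=d\,\ev^*(H)$ and $\ev^*(H)^{s+1}=0$ on $\M$, the highest power of $d$ coming from the product is $d^{s}$, with coefficient a nonzero multiple of $\int_{\M}\ev^*(H)^{s}\psi^{\,n(s+1)-2}$, a one–pointed genus-$0$ gravitational descendant of $\mathbb P^s$ (equal to $1$ when $n=1$ by~\eqref{second}, and nonzero in general). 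The delicate part is to check that the boundary terms — which by Proposition~\ref{prop:polynomial} are again polynomials in $d$ of degree $\le s$ — do not cancel this $d^{s}$ contribution; I would handle this by the same induction on $m$ as in the proof of Proposition~\ref{prop:polynomial}, tracking the sign of the degree-$s$ part of~\eqref{eq:int_D}. This last bookkeeping is the step I expect to be the real obstacle; everything else is a direct application of Proposition~\ref{prop:polynomial} together with the cited geometric input on rational curves.
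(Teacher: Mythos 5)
Your first two paragraphs are exactly the paper's proof: the paper simply notes that for $d\ge 2s-1$ a general $Y$ is smooth and contains no rational curves (citing Clemens) and then invokes Proposition~\ref{prop:polynomial} with $k=j=0$. The sharpness issue you isolate in your third paragraph is real but is not addressed in the paper at all --- Proposition~\ref{prop:polynomial} only bounds the degree of $p$ by $s-1$, so the paper's own argument likewise establishes only ``degree at most $s-1$,'' with exactness visible only in the computed instances such as \eqref{eq:max_lines}, \eqref{eq:T22} and \eqref{eq:T23}; you need not complete that bookkeeping to match what the paper actually proves.
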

\begin{proof}
    If $d\ge 2s-1$, a general $Y$ is smooth and does not contain rational curves by \cite{MR0875091}. The result follows by Proposition~\ref{prop:polynomial}. 
\end{proof}
Thus, in order to completely determine the polynomial $\pol_{s,n}$, it is enough to compute it for $s$ values of $d$ greater or equal than $2s-1$. Using the packages \cite{MR4383164,MR4739375}, we computed the following values by hand:
\begin{align*}
\pol_{2,2}(3)&=\frac{135}{4}, & \pol_{2,2}(4) &=102,\\
\pol_{2,3}(3)&=244, & \pol_{2,3}(4) &=\frac{2384}{3}.
\end{align*}
Using polynomial interpolation, these values imply that for $d\ge 3$,
\begin{align}
    \pol_{2,2}(d) &= \frac{57}{4}d^2-\frac{63}{2}d, \label{eq:T22} \\
    \pol_{2,3}(d) &= \frac{352}{3}d^2-\frac{812}{3}d.\label{eq:T23}
\end{align}
We checked that \eqref{eq:T22} and \eqref{eq:T23} match \eqref{eq:Tsn} for many values of $d$, using \cite{GROWI}. Using again interpolation, we see that
$$T_{3,2}(d)=\frac{20331}{4}d^3 - 30294 d^2 + 39852 d.$$
In the introduction, we said that we plan to compute the number of maximal contact conics to any surface $Y\subset \mathbb P^3$ of degree $d$. This problem is equivalent to removing from $T_{3,2}(d)$ the contribution from non enumerative stable maps (e.g., double covers).
\begin{rem}
    In Equation~\eqref{eq:P}, setting $m=n(s+1)-2$, $k=s-1$, and $j=0$, the polynomial $p_0(d)$ is constant, thus 
    $$\int_{\overline{M}_{(m+1)}^Y(\mathbb{P}^s,n)} \ev^*(Y)^{s-1}=d^sc$$
    for some $c$. This constant is an integer, see \cite[Proposition~4.1]{MR4256011} for its geometrical meaning. 
\end{rem}
\section{Sextactic conics}
\label{section:sextactic}
In this section we focus on the case of smooth conics. Our goal is to compute the contribution to $T_{2,2}$ from double covers. We denote by $Y$ a general smooth plane curve of degree $d$.
\begin{lem}\label{lem:Gat21}
    Let $m$ be a non negative integer such that $m\le 6$, and let $(C,f,x)$ be an irreducible stable map that is not a double cover of a line. Then the moduli space $\overline M_{(m)}^Y(\mathbb P ^2,2)$ is smooth of dimension $6-m$ at $(C,f,x)$ (which is the expected dimension).
\end{lem}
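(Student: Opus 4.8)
The plan is to show the relative stable map $(C,f,x)$ is unobstructed. An irreducible genus $0$ stable map of degree $2$ to $\mathbb P^2$ that is not a double cover of a line has domain $C\cong\mathbb P^1$ and maps it isomorphically onto a smooth conic; so $f$ is an immersion, $(C,f,x)$ has no nontrivial automorphisms, and --- since $f$ does not factor through $Y$ --- no expanded degeneration of the target occurs near $(C,f,x)$ (in the sense of \cite{MR1944571,MR1962055}). Hence, near $(C,f,x)$, the space $\overline M^Y_{(m)}(\mathbb P^2,2)$ coincides with the locus in the smooth $6$-fold $\overline M_{0,1}(\mathbb P^2,2)$ where the marked point has contact order $\ge m$ with $Y$, which is the zero scheme of a section $\sigma$ of the rank-$m$ bundle $\mathcal J$ of $(m-1)$-jets, along the universal marked-point section, of the universal pull-back of the equation of $Y$. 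Consequently every component of $\overline M^Y_{(m)}(\mathbb P^2,2)$ through $(C,f,x)$ has dimension $\ge 6-m$, and this space is smooth of dimension $6-m$ at $(C,f,x)$ precisely when $d\sigma$ is surjective there. The cases $m\le 1$ being immediate, assume $m\ge 2$.

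\textbf{Identifying $d\sigma$.} Fix a local equation $g$ of $Y$ at $p:=f(x)$ and put $\phi:=g\circ f$, a function near $x$ with $\mathrm{ord}_x\phi=m'$, where $m'\ge m$ is the contact order of $f$ with $Y$ at $x$. On $T_{(C,f,x)}\overline M_{0,1}(\mathbb P^2,2)=H^0(C,f^{*}T_{\mathbb P^2})/H^0(C,T_C(-x))$, the differential $d\sigma$ (well defined since $\sigma$ vanishes at $(C,f,x)$, because $\mathrm{ord}_x\phi=m'\ge m$) sends the class of a vector field $\xi$ to the $(m-1)$-jet of $dg(\xi)$ at $x$; this descends to the quotient because a section of $T_C(-x)$ is carried by $dg$ to a multiple of $\phi'$ vanishing to order $\ge m$ at $x$. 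So it suffices to show that $\xi\mapsto j^{m-1}_x\big(dg(\xi)\big)\in\mathbb C^{m}$ is surjective on $H^0(C,f^{*}T_{\mathbb P^2})$. Equivalently, the cokernel of $d\sigma$ is $H^1(C,\mathcal E)$, where $\mathcal E\subset f^{*}T_{\mathbb P^2}$ is the colength-$m$ modification at $x$ defined by $\mathrm{ord}_x(dg(\xi))\ge m$, a rank-$2$ bundle of degree $6-m$; one must check that $\mathcal E$ has no summand of degree $\le-2$.

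\textbf{The computation, and the main obstacle.} For a smooth conic, $f^{*}T_{\mathbb P^2}\cong\mathcal O(2)\oplus\mathcal O(4)$ with the summand $\mathcal O(2)$ equal to $T_C$: here $T_C$ is a sub-bundle (as $f$ is immersed) with locally free quotient $N_f\cong\mathcal O(4)$, which on $\mathbb P^1$ pins down the splitting type. Near $x$, smoothness of $Y$ at $p$ makes $dg\colon f^{*}T_{\mathbb P^2}\to\mathcal O_C$ surjective; its restriction to $N_f$ is an isomorphism onto $\mathcal O_C$ there --- because $\ker(dg)_x=T_pY=(T_C)_x$ (using $m\ge2$) is transverse to $(N_f)_x$ --- while its restriction to $T_C$ is multiplication by $\phi'$, of vanishing order $m'-1$ at $x$. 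Hence the image of $H^0(C,N_f)$ under $d\sigma$ equals the image of $H^0(\mathcal O(4))\to\mathcal O(4)\otimes(\mathcal O_C/\mathcal I_x^{m})\cong\mathbb C^{m}$, which is all of $\mathbb C^{m}$ for $m\le5$. The borderline case $m=6$ is the one requiring care: a general plane curve of degree $d$ carries no smooth conic of contact $\ge7$ (a parameter count in the variety of triples (conic, point, degree-$d$ curve) shows the projection to the last factor is not dominant), so $m'=6$ here; now $H^0(\mathcal O(4))\to\mathcal O(4)\otimes(\mathcal O_C/\mathcal I_x^{6})$ is one dimension short, i.e.\ the image of $H^0(C,N_f)$ is only a hyperplane in $\mathbb C^{6}$, and moving the marked point contributes nothing to the top jet --- but the summand $T_C$ supplies the missing line: for $\xi\in H^0(C,T_C)$ one has $dg(\xi)=\lambda\phi'$ with $\lambda$ the freely chosen coefficient of $\partial_t$, so $j^{5}_x(\lambda\phi')$ runs over $\mathbb C\cdot t^{5}$ as $\phi'$ has order exactly $5$. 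In every case $d\sigma$ is surjective, equivalently $H^1(C,\mathcal E)=0$, and the lemma follows. This reliance on the tangential deformations is exactly what fails for a double cover of a line, whose normal sheaf is different (and whose source may be forced to degenerate) --- consistently, $\overline M^Y_{(m)}(\mathbb P^2,2)$ can genuinely be singular there, cf.\ Example~\ref{example:my_example}.
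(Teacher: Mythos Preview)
Your argument is correct and amounts to a self-contained proof of what the paper obtains by citing \cite[Lemma~2.1]{MR2134277}. The paper defers entirely to Gathmann; you instead run the obstruction computation directly, identifying the tangent space, the differential $d\sigma$, and the splitting $f^*T_{\mathbb P^2}\cong T_C\oplus N_f$, then handling $m\le 5$ via the surjectivity of $H^0(\mathcal O(4))\to\mathbb C^m$. What your route buys is that the use of the generality of $Y$ (needed only at $m=6$, to force $m'=6$) is made explicit.

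Two points in the $m=6$ case deserve tightening. First, the clause ``moving the marked point contributes nothing to the top jet --- but the summand $T_C$ supplies the missing line'' is internally inconsistent: in your presentation of the tangent space as $H^0(f^*T_{\mathbb P^2})/H^0(T_C(-x))$, the image of $H^0(T_C)$ \emph{is} the marked-point direction, and it contributes exactly $\mathbb C\cdot t^{5}$ (as you then correctly compute). Second, to conclude that this line together with the hyperplane $\mathrm{im}\,H^0(N_f)$ spans $\mathbb C^{6}$ you must know $t^{5}\notin\mathrm{im}\,H^0(N_f)$. This holds because $H^0(\mathcal O(4))\to\mathbb C^{5}$ (the first five jets) is an isomorphism, so the hyperplane is the graph of a linear map $\mathbb C^{5}\to\mathbb C$ and cannot contain $(0,\dots,0,1)$; equivalently, no section of $\mathcal O(4)$ vanishes to order exactly $5$ at a point. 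With that remark the argument closes; in sheaf language it shows $\mathcal E\cong\mathcal O(1)\oplus\mathcal O(-1)$ when $m=m'=6$, so $H^1(\mathcal E)=0$ as required.
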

\begin{proof}
    It follows from \cite[Lemma~2.1]{MR2134277}.
\end{proof}
An immediate consequence of this lemma is the following,
\begin{cor}\label{cor:unique_irr_comp}
    Every irreducible stable map in $\overline M_{(m)}^Y(\mathbb P ^2,2)$ whose image in $\mathbb P^2$ is a smooth
conic lies in a unique irreducible component of $\overline M_{(m)}^Y(\mathbb P ^2,2)$ of the expected dimension. The virtual
fundamental class of this component is equal to the usual one.
Moreover, the number of smooth plane conics of maximal contact with $Y$ is finite. 
\end{cor}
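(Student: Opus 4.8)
The plan is to derive the statement from Lemma~\ref{lem:Gat21}, following the approach of \cite{MR2134277}. Write $W\subseteq\overline M_{(m)}^Y(\mathbb P^2,2)$ for the locus of irreducible stable maps whose image in $\mathbb P^2$ is a smooth conic. This $W$ is open: the domain of a stable map being irreducible is an open condition, and, among stable maps with irreducible domain, having image a smooth conic is equivalent to the image not being contained in a line, which is again open. Moreover, since a degree-$2$ stable map with irreducible domain whose image is not a line has image a smooth conic, $W$ is exactly the locus of irreducible stable maps that are not double covers of a line. Hence, by Lemma~\ref{lem:Gat21}, $W$ is smooth of pure dimension $6-m$, which is the expected dimension.

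Fix $(C,f,x)\in\overline M_{(m)}^Y(\mathbb P^2,2)$ with smooth conic image; then $(C,f,x)\in W$, so $\overline M_{(m)}^Y(\mathbb P^2,2)$ is smooth of dimension $6-m$ at $(C,f,x)$ and therefore lies on a single irreducible component $Z$. Since $Z\cap W$ is a nonempty open, hence dense, subset of the irreducible variety $Z$, we get $\dim Z=\dim(Z\cap W)=6-m$, the expected dimension; this gives the first assertion. For the second, note that on $W$ the obstruction theory underlying the virtual class is trivial (this is precisely the smoothness in Lemma~\ref{lem:Gat21}), so $[\overline M_{(m)}^Y(\mathbb P^2,2)]$ restricts on $W$ to the ordinary fundamental cycle of $W$; as $Z\cap W$ is dense in $Z$, the part of $[\overline M_{(m)}^Y(\mathbb P^2,2)]$ supported on $Z$ is exactly the ordinary fundamental class of $Z$, with no excess.

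Finally, specialize to $m=6$, so $6-m=0$: then $W$ is a smooth scheme of dimension $0$, and since $\overline M_{(6)}^Y(\mathbb P^2,2)$ is proper, $W$ is a finite set of reduced points. Every smooth plane conic of maximal (that is, order $6$) contact with $Y$, marked at one of its contact points, is the image of a point of $W$, so the set of such conics is finite. I expect the only step needing genuine care to be this last identification, i.e. checking that such a conic actually defines a relative stable map lying in $W$; for general $Y$ this holds because the remaining $2d-6$ intersections with $Y$ are transverse and force no expansion of the target, so the domain stays $\mathbb P^1$. Everything else is a formal consequence of Lemma~\ref{lem:Gat21}.
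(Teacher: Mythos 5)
Your proposal is correct and follows essentially the same route as the paper: everything is deduced from Lemma~\ref{lem:Gat21}, with smoothness at the given point yielding the unique component of expected dimension, triviality of the obstruction there identifying the virtual class with the ordinary one, and the $m=6$ case giving isolated (hence finitely many) points. The paper's own proof is just a terser version of this; your extra care with the openness of the smooth-conic locus and the identification of maximal-contact conics with points of that locus is consistent with what the paper leaves implicit.
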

\begin{proof}
    The fact that the stable map $(C,f,x)$ lies in a unique irreducible component of the expected dimension follows directly from the lemma. The virtual fundamental class is equal to the usual one because it is smooth of the right dimension (see \cite[Proposition~5.6]{MR1437495}).

    Finally, the smooth plane conics of maximal contact with $Y$ are supported at the isolated points of $\overline M_{(6)}^Y(\mathbb P ^2,2)$. Hence there are only a finite number of them.
\end{proof}

\begin{prop}\label{prop:M5}
    The moduli space $\overline M^Y_{({5})}(\mathbb P ^2,2)$ is a connected space with the following irreducible components.
    \begin{enumerate}[label=(\Alph*)]
        \item An irreducible component parameterizing smooth conics, denoted by $M^*$.
        \item One smooth rational curve for each line $l$ that is inflectional to $Y$ at $p$. This space parameterizes $1$-marked double covers of $l$ such that the mark is mapped to $p$.
    \end{enumerate}
\end{prop}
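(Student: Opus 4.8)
\emph{Strategy.} The plan is to stratify $\overline{M}^Y_{(5)}(\mathbb P^2,2)$ by the image in $\mathbb P^2$ of the stable map: a smooth conic, a double cover of a line, or a reducible conic (two lines, one possibly contracted). I would first pin down the two asserted families and show that each is an irreducible component of the expected dimension $1$, then argue that every remaining stratum is at most $0$-dimensional and lies in the closure of one of these two, and finally read off connectedness from the same closure analysis. Throughout I rely on Lemma~\ref{lem:Gat21} and Corollary~\ref{cor:unique_irr_comp}, which already guarantee that the locus of maps with smooth conic image is well behaved.

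\emph{The component $M^*$.} Fixing a point $q\in Y$, the conics having contact order $\ge 5$ with $Y$ at $q$ form, for general $q$, a single point of the $\mathbb P^5$ of plane conics, since the tangency condition and the three successive osculation conditions each drop the dimension by one. Letting $q$ vary over the irreducible curve $Y$ sweeps out an irreducible locally closed subset of $\overline{M}^Y_{(5)}(\mathbb P^2,2)$ of dimension $1$, whose general point is, by Corollary~\ref{cor:unique_irr_comp}, a smooth point of the moduli space of the expected dimension. Hence its closure $M^*$ is a reduced irreducible component.

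\emph{The type-(B) components.} Let $(C,f,x)$ have image a double cover $f\colon\mathbb P^1\to l$. If $x$ is not a ramification point of $f$, the contact order of $f$ with $Y$ at $x$ equals $\mathrm{ord}_{f(x)}(l\cap Y)\le 3<5$ for general $Y$, so $x$ must be a ramification point; then the contact order at $x$ is $2\,\mathrm{ord}_{f(x)}(l\cap Y)$, and this is $\ge 5$ exactly when $\mathrm{ord}_p(l\cap Y)\ge 3$ with $p:=f(x)$, i.e.\ when $l$ is inflectional to $Y$ at $p$ — in which case, for general $Y$, the contact is exactly $3$ and the contact order at $x$ is $6$. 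Conversely, for each inflectional line $l$ at $p$ the double covers of $l$ ramified over $p$ with the mark at that ramification point and the second branch point varying over $l$ form a family whose closure is a smooth rational curve of dimension $1$; since these covers already realise the maximal contact order $6$, this curve is a full-dimensional irreducible subset of the (at most $1$-dimensional) space $\overline{M}^Y_{(5)}(\mathbb P^2,2)$ and not contained in $M^*$, hence an irreducible component of type (B). There are $\deg([\overline{M}^Y_{(3)}(\mathbb P^2,1)])=3d(d-2)$ of them by Example~\ref{example:Infl}.

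\emph{No other components, and connectedness.} If a reducible stable map in $\overline{M}^Y_{(5)}(\mathbb P^2,2)$ had the mark on a line component, that line would need contact $\ge 5$ with $Y$, impossible for general $Y$; so the mark lies on a contracted component gluing two lines whose contact orders at the common point of $Y$ sum to $\ge 5$. Since each is $\le 3$, the only surviving possibilities force the two lines to coincide with the inflectional line at an inflection point, giving rigid configurations that span no one-dimensional component, and each of these is a degeneration both of smooth conics of contact $\ge 5$ collapsing onto a doubled inflectional line $2l$ (which has contact order $6$ with $Y$) and of type-(B) covers. This boundary bookkeeping is where I expect the real difficulty: it genuinely requires the expanded-target description of relative stable maps rather than naive limits of conics, and I would either carry it out by a deformation-space computation along the strata of $D_{(5)}(\mathbb P^2,2)$ recorded in Example~\ref{example:my_example}, or import it from the parallel analysis of \cite{MR2134277}. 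Connectedness then follows: tracking the conics of $M^*$ as their contact point tends to an inflection point $p$, the limit is a stable map lying on the type-(B) curve attached to the inflectional line at $p$, so the irreducible set $M^*$ meets every other component and $\overline{M}^Y_{(5)}(\mathbb P^2,2)$ is connected.
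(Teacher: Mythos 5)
Your proposal is correct and follows essentially the same route as the paper: component (A) via Corollary~\ref{cor:unique_irr_comp} together with the uniqueness of the contact-order-$5$ conic at a general point of $Y$, component (B) via the observation that a double cover must be ramified over an inflectional point with the second branch point free, reducible images ruled out by (in your case, a direct sketch of; in the paper's case, a citation to \cite[Remark~1.5]{MR2134277} for) the relative multiplicity conditions, and connectedness from $M^*$ meeting each $M_l$ at the double inflectional line. The one step you flag as needing to be imported from \cite{MR2134277} is precisely the step the paper also delegates to that reference, so there is no substantive divergence.
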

\begin{proof}
     If the image of the stable map is a smooth conic, point $(A)$ follows from Corollary~\ref{cor:unique_irr_comp}.
    If the image is instead the union of two distinct lines, then the conditions of \cite[Remark~1.5]{MR2134277} cannot be satisfied. 

    Now, suppose that the stable map is a double cover of a line. If a ramification point is mapped to a point of $Y$, it doubles the contact order of the line with $Y$ at that point. It is clear that the only possibility for getting a contact strictly greater than $4$ is when the ramification point is mapped to an inflectional point. The choice of the other branch point gives a unidimensional family.

    Note that, for each point of $Y$, there is only one conic with contact order $5$. Because if we had two, any conic in the pencil generated by them would have contact order $5$. This is impossible. More formally, we may say that the map of curves $\ev\colon M^*\rightarrow Y$ has degree $1$ by \cite[Proposition~4.1]{MR4256011}.

    Since $Y$ is smooth, we have $M^*\cong Y$. In particular $M^*$ is connected.

    Finally, $\overline M^Y_{({5})}(\mathbb P ^2,2)$ is connected because at any inflectional point $p\in Y$, the only conic with contact order $5$ at $p$ is the double inflectional line. Thus $M^*$ meets any component in $(B)$.
\end{proof}
The components described in point $(B)$ are particularly interesting. We denote them by $M_l$ as in the following.
\begin{defn}
    Let $l$ be a line inflectional to $Y$ at $p$. We denote by $M_l$ the closure of the space of stable maps $(\mathbb P^1,f,x)$ such that $f$ is a double cover of $l$ ramified at $x$ and $f(x)=p$.
\end{defn}
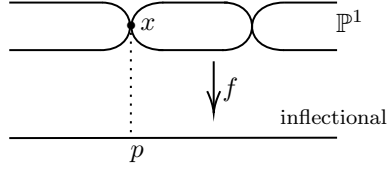
\begin{figure}[t]
    \centering
    \begin{tikzpicture}[x=0.75pt,y=0.75pt,yscale=-1,xscale=1]

\draw    (0.67,82.5) -- (166,82.3) ;
\draw  [dash pattern={on 0.84pt off 2.51pt}]  (62,29.3) -- (62,82.3) ;
\draw    (103.78,43.75) -- (103.78,67.75) ;
\draw [shift={(103.78,69.75)}, rotate = 270] [color={rgb, 255:red, 0; green, 0; blue, 0 }  ][line width=0.75]    (10.93,-3.29) .. controls (6.95,-1.4) and (3.31,-0.3) .. (0,0) .. controls (3.31,0.3) and (6.95,1.4) .. (10.93,3.29)   ;
\draw    (108.98,14.1) .. controls (127.98,14.27) and (127.98,38.52) .. (107.98,38.1) ;
\draw    (47.45,14.1) .. controls (66.45,14.27) and (66.45,38.52) .. (46.45,38.1) ;
\draw    (78.4,14.1) .. controls (57.06,14.21) and (56.75,38.21) .. (78.4,38.1) ;
\draw    (139.25,14.1) .. controls (117.91,14.21) and (117.6,38.21) .. (139.25,38.1) ;
\draw    (0.67,14.1) -- (47.67,14.1) ;
\draw    (0.67,38.1) -- (46.67,38.1) ;
\draw    (75.88,38.1) -- (108.83,38.1) ;
\draw    (77.88,14.1) -- (110.83,14.1) ;
\draw    (138.88,38.1) -- (166,38.1) ;
\draw    (139.25,14.1) -- (166,14.1) ;

\draw (164,16.5) node [anchor=north west][inner sep=0.75pt]    {$\mathbb{P}^{1}$};
\draw (135.8,66.3) node [anchor=north west][inner sep=0.75pt]  [font=\footnotesize] [align=left] {inflectional};
\draw (106.78,50.31) node [anchor=north west][inner sep=0.75pt]    {$f$};
\draw (65.36,20.5) node [anchor=north west][inner sep=0.75pt]    {$x$};
\draw (58.0,22.00) node [anchor=north west][inner sep=0.75pt]    {{${\scriptstyle \bullet}$}};
\draw (60.36,85) node [anchor=north west][inner sep=0.75pt] {$p$};   

\end{tikzpicture}

    \caption{The space $M_l$.}
    \label{fig:Ml}
\end{figure}
Figure~\ref{fig:Ml} gives us an idea of a curve in $M_l$. Before we move on, we need to understand better the virtual fundamental class of $\overline M^Y_{({5})}(\mathbb P ^2,2)$. Note that there exists a section $\sigma$ of a line bundle on $M^*$, whose vanishing describes the condition of increasing by $1$ the contact order at the marked point, see \cite[Remark~1.12]{MR2134277}.
\begin{prop}\label{prop:M5restr_and_section}
    We have the following:
    \begin{enumerate}
        \item The virtual fundamental class of $\overline M^Y_{({5})}(\mathbb P ^2,2)$ on $M_l$ is the usual one.
        \item Let $\mathcal{C}$ be the intersection between $M^*$ and $M_l$. The section $\sigma$ on $M^*$ vanishes at $\mathcal{C}$ with multiplicity $1$.
    \end{enumerate}
\end{prop}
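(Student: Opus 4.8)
The plan is to treat the two parts separately; Part $(1)$ is the delicate one. For $(1)$, observe first that by Proposition~\ref{prop:M5} the component $M_l\cong\mathbb P^1$ has dimension $1$, which is exactly the virtual dimension of $\overline M^Y_{(5)}(\mathbb P^2,2)$. Hence $M_l$ is not an excess component, and it suffices to show that $\overline M^Y_{(5)}(\mathbb P^2,2)$ is smooth of dimension $1$ at a general point of $M_l$: when the moduli space is smooth of the virtual dimension at a point, the obstruction sheaf vanishes there, so the virtual class agrees with the fundamental class near that point, and in particular its restriction to $M_l$ is $[M_l]$. To prove this smoothness I would argue by deformation theory at a general $(C,f,x)\in M_l$, where $C\cong\mathbb P^1$ and $f$ is the double cover of the inflectional line $l$ ramified at $x$ over $p$ and at one further point over a general $q\in l$ with $q\notin Y$. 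Since $l$ meets $Y$ transversally away from $p$ and $f$ is unramified over those points, no expansion of the target is needed at such $(C,f,x)$, so one is reduced to a computation on the smooth stack $\overline M_{0,1}(\mathbb P^2,2)$, where the condition that $x$ have contact order $\ge5$ with $Y$ is cut out by the five Taylor coefficients $\tau_0,\dots,\tau_4$ of $f^*(s_Y)$ at $x$ (with $s_Y$ a defining section of $Y$); one has to check that $d\tau_0,\dots,d\tau_4$ are linearly independent at $(C,f,x)$, using the positivity of $f^*T_{\mathbb P^2}\cong\mathcal O_{\mathbb P^1}(2)\oplus\mathcal O_{\mathbb P^1}(4)$. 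This is the analogue of \cite[Lemma~2.1]{MR2134277}, which however explicitly excludes double covers of lines and so must be redone here; verifying this transversality is, I expect, the main obstacle of the proof.

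For $(2)$, I would localise around the inflection point. Under the isomorphism $\ev\colon M^*\xrightarrow{\sim}Y$ the point $\mathcal C=M^*\cap M_l$ corresponds to $p$, and by Proposition~\ref{prop:M5} the conic it parameterises is the double inflectional line $2l$. I would choose affine coordinates $(x,y)$ with $p=(0,0)$, $l=\{y=0\}$, and $Y=\{y=x^3+c_4x^4+\cdots\}$ near $p$ (rescaling so the cubic coefficient is $1$; it is nonzero since $p$ is an ordinary inflection of the general curve $Y$), and use as a local parameter on $M^*\cong Y$ at $\mathcal C$ the $x$-coordinate $t$ of the contact point $p_t\in Y$. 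By Corollary~\ref{cor:unique_irr_comp}, for small $t$ there is a unique conic $Q_t$ with contact order $\ge5$ with $Y$ at $p_t$, and $Q_0=\{y^2=0\}$. Substituting the parameterisation $x\mapsto(x,x^3+c_4x^4+\cdots)$ of $Y$ into the equation of $Q_t$ gives a function $g_t(x)$ with $\operatorname{ord}_{x=t}g_t\ge5$, and by the description of $\sigma$ in \cite[Remark~1.12]{MR2134277} the value $\sigma([Q_t])$ equals, up to a nowhere-vanishing factor near $\mathcal C$, the coefficient $G(t)$ of $(x-t)^5$ in $g_t$. Thus $\operatorname{ord}_{\mathcal C}\sigma=\operatorname{ord}_{t=0}G$.

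It then remains to compute $\operatorname{ord}_{t=0}G$. Write $Q_t=\{y^2+t\,h(x,y)+O(t^2)=0\}$ for a conic $h$ (well defined modulo the gauge direction $y^2$), and solve for $h$ from the five linear equations expressing $\operatorname{ord}_{x=t}g_t\ge5$. Then $g_t(x)=(x^3+\cdots)^2+t\,h(x,x^3+\cdots)+O(t^2)$; the series $(x^3+\cdots)^2$ begins with $x^6=((x-t)+t)^6$, which contributes $6t$ to the coefficient of $(x-t)^5$ (the remaining monomials contribute only $O(t^2)$), while $t\,h(x,x^3+\cdots)$ contributes $O(t^2)$ once $h$ has been solved for. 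Hence $G(t)=6t+O(t^2)$, so $G$, and therefore $\sigma$, has a simple zero at $\mathcal C$. Once $\sigma$ is identified with $G$ up to a unit this last step is a transparent expansion — its leading term $6t$ being forced by the single fact that $2l$ meets $Y$ with multiplicity $6$ at $p$, independently of the higher coefficients $c_4,c_5,\dots$ — so the real difficulty lies in Part $(1)$.
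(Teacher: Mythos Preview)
Your plan is correct and follows essentially the same strategy as the paper --- local coordinate computations near the inflection point --- but the execution differs in emphasis and in the choice of coordinates.

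For Part~(1), the paper does exactly what you propose, but carries it out explicitly and briefly: it writes down coordinates $(\epsilon_1,\dots,\epsilon_5)$ on $\overline M_{0,0}(\mathbb P^2,2)$ near the double cover of $l$ via $[s:t]\mapsto[s^2+\epsilon_1t^2:t^2+\epsilon_2s^2:\epsilon_3s^2+\epsilon_4st+\epsilon_5t^2]$, adds the marked point at $t=\epsilon$, and checks that the relevant conditions are linear in $(\epsilon_2,\dots,\epsilon_5,\epsilon)$. So $M_l$ is cut out by linear equations, hence smooth and reduced in $\overline M^Y_{(5)}(\mathbb P^2,2)$. You correctly identify this as the transversality of the tangency sections, but over-estimate its difficulty: once coordinates are chosen it is a one-line check, not the ``main obstacle''.

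For Part~(2), the paper takes a more direct route than yours. Rather than parameterising $M^*$ by the contact point on $Y$ and implicitly solving for the osculating conic $Q_t$, it uses coordinates $(\eta_1,\dots,\eta_5)$ on the space of maps $f(t)=(t^2+\eta_1t+\eta_2,\eta_3t^2+\eta_4t+\eta_5)$ near the double line, with the marked point at $t=0$ and $Y$ locally $z_2=z_1^3+\cdots$. Requiring contact order~$5$ kills the coefficients of $t^0,\dots,t^4$ in $\eta_3t^2+\eta_4t+\eta_5-(t^2+\eta_1t+\eta_2)^3$; the coefficient of $t^5$ is then $-3\eta_1$, so $\sigma$ vanishes to first order. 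Your approach via $G(t)=6t+O(t^2)$ is equally valid and gives the same answer, but requires the extra step of determining the first-order variation $h$ of $Q_t$ (which, as you note, turns out to vanish). The paper's choice of coordinates on the map space rather than on $Y$ avoids this detour.
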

\begin{proof}
    This proof involves some local computations. Let $[z_0:z_1:z_2]$ be homogeneous coordinates for $\mathbb P^2$. We may assume, using an automorphism of $\mathbb P^1$, that a general point of $M_l$ is a double cover ramified at the points $[1:0]$ and $[0:1]$. Moreover, we fix a line $l:z_2=0$ that is inflectional to $Y$ at $[1:0:0]$. Let us consider coordinates of $\overline M_{0,0}(\mathbb P^2,2)$ centered at a double cover of $l$:
    $$
    [s:t]\mapsto [s^2+\epsilon_1 t^2:
    t^2+\epsilon_2s^2:
    \epsilon_3s^2+\epsilon_4st+\epsilon_5t^2].
    $$
    In order to get the space of all double covers that map $[1:0]$ to $[1:0:0]$, it is enough to impose
    $$\epsilon_2=\epsilon_3=\epsilon_4=\epsilon_5=0.$$
    So, we get the one dimensional space of double covers $f\colon\mathbb P^1\rightarrow l$ whose image of a ramification point is fixed. Locally around $[1:0]$, the maps can be represented as
    $$
    f(t)=\frac{t^2}{1+\epsilon_1t^2}.
    $$
    Let us add a marked point whose coordinate is $t=\epsilon$. The tangency condition at the marked point is obtained by imposing that
    $$
    f(t+\epsilon)=\frac{(t+\epsilon)^2}{1+\epsilon_1(t+\epsilon)^2}=t^2+2t\epsilon+\epsilon^2-\epsilon_1 o((t+\epsilon)^4)
    $$
    has no linear $t$ terms locally around zero. This is obtained by imposing $\epsilon=0$. Thus $M_l$ is locally the vanishing of linear coordinates. So it is smooth, in particular it has a reduced structure in $\overline M^Y_{({5})}(\mathbb P ^2,2)$.

    Let us prove the second point. Let us consider the family of stable maps whose image is a conic and centered at the double line $l$. In affine coordinates we may represent it as
    $$
    f(t) = (t^2+\eta_1 t + \eta_2,\eta_3t^2+\eta_4t+\eta_5).
    $$
    Around the inflectional line $l$, the equation of $Y$ is, modulo a change of coordinates, $z_2=z_1^3+o(z_1^4)$. If we want that the family $f(t)$ meets $Y$ with contact order $5$, we need that all coefficients of $t^i$ are zero for $i\le4$ in the following expression:
    $$
    \eta_3t^2+\eta_4t+\eta_5 = (t^2+\eta_1 t + \eta_2)^3.
    $$
    In order to increase the multiplicity, we need that also the coefficient of $t^5$ is zero. That is, $\eta_1=0$. This is true also if we consider conics with one marked point (same as before). Thus, $\sigma$ vanishes around $\mathcal{C}$ with a linear equation. 
\end{proof}
We want to study the moduli space $\overline M^Y_{({6})}(\mathbb P ^2,2)$. As we saw in the proof of Proposition~\ref{prop:M5}, the spaces $M_l$ have contact order $6$. So the following corollary is easily proved.
\begin{cor}\label{cor:spaceM6}
    The moduli space $\overline M^Y_{({6})}(\mathbb P ^2,2)$ has the following connected components.
    \begin{enumerate}[label=(\Alph*)]
        \item One reduced point for every sextactic point of $Y$.
        \item One smooth rational curve $M_l$ for each inflectional line $l$.
    \end{enumerate}
\end{cor}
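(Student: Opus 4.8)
The plan is to identify $\overline M^Y_{({6})}(\mathbb P^2,2)$ set‑theoretically inside $\overline M^Y_{({5})}(\mathbb P^2,2)$ as the locus where the contact order at the marked point is at least $6$, and then to invoke the description of the latter space from Proposition~\ref{prop:M5}. By that proposition, $\overline M^Y_{({5})}(\mathbb P^2,2)=M^*\cup\bigcup_l M_l$, where $l$ ranges over the finitely many inflectional lines of $Y$ (there are $3d(d-2)$ of them by Example~\ref{example:Infl}). Hence every point of $\overline M^Y_{({6})}(\mathbb P^2,2)$ lies either on $M^*$ or on some $M_l$, and it suffices to analyse these two loci separately; in particular no reducible curve or unexpected boundary stratum can occur in $\overline M^Y_{({6})}(\mathbb P^2,2)$, since none occurs already in $\overline M^Y_{({5})}(\mathbb P^2,2)$.

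For the components $M_l$, recall from the proof of Proposition~\ref{prop:M5} that a double cover of $l$ ramified at a point mapping to the inflectional point $p$ has contact order exactly $6$ with $Y$ (the inflectional contact order $3$ is doubled); thus $M_l\subseteq\overline M^Y_{({6})}(\mathbb P^2,2)$ in its entirety. By Proposition~\ref{prop:M5restr_and_section}(1) and the local analysis in its proof, $M_l$ is cut out by linear equations, so it is a smooth — hence reduced — rational curve in $\overline M^Y_{({6})}(\mathbb P^2,2)$, with virtual class equal to the fundamental class. I would then argue that $M_l$ is a \emph{connected} component: the curves $M_l$ are pairwise disjoint, since a double cover determines its image line, and $M_l$ meets $M^*$ only in the single point $\mathcal C=M^*\cap M_l$, which already lies on $M_l$; since (see the next paragraph) $M^*\cap\overline M^Y_{({6})}(\mathbb P^2,2)$ is zero‑dimensional, the connected component of $\overline M^Y_{({6})}(\mathbb P^2,2)$ through any point of $M_l$ is exactly $M_l$. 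This produces the components of type $(B)$.

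For the remaining points — those on $M^*$ — recall the section $\sigma$ of a line bundle on $M^*$ whose vanishing expresses the increase of the contact order by one. Thus $M^*\cap\overline M^Y_{({6})}(\mathbb P^2,2)$ is the zero scheme $Z(\sigma)$ of $\sigma$ on $M^*\cong Y$. By Corollary~\ref{cor:unique_irr_comp} there are only finitely many smooth conics of maximal contact with $Y$, so $\sigma$ does not vanish identically and $Z(\sigma)$ is finite. It consists of the sextactic conics of $Y$, together with the points $\mathcal C$ (where $\sigma$ vanishes to order one by Proposition~\ref{prop:M5restr_and_section}(2)); the latter have already been accounted for inside the $M_l$. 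Each remaining point corresponds to a smooth conic, which is an irreducible stable map that is not a double cover of a line, so by Lemma~\ref{lem:Gat21} with $m=6$ the space $\overline M^Y_{({6})}(\mathbb P^2,2)$ is smooth of dimension $6-6=0$ there; hence that point is reduced and is its own connected component. This produces the components of type $(A)$, one for each sextactic conic.

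The step I expect to require the most care is the first one: making sure that passing from $\overline M^Y_{({5})}(\mathbb P^2,2)$ to $\overline M^Y_{({6})}(\mathbb P^2,2)$ genuinely just carves out the contact‑order‑$\ge 6$ locus without introducing new boundary behaviour, and correctly tracking the scheme structure (reducedness of the sextactic points and of the $M_l$) through this identification — this is where Lemma~\ref{lem:Gat21}, Proposition~\ref{prop:M5restr_and_section} and the recursion~\eqref{eq:fundamental} do the real work, together with the input from \cite{MR2134277}. A minor further point is to observe that the intersection point $\mathcal C$ is not itself a sextactic point, which is clear since it corresponds to a double inflectional line rather than to a smooth conic, so that the components of types $(A)$ and $(B)$ are indeed disjoint.
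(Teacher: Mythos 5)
Your proposal is correct and follows exactly the route the paper intends: the paper gives no written proof of this corollary, stating only that it is ``easily proved'' once one knows from Proposition~\ref{prop:M5} that $\overline M^Y_{(6)}(\mathbb P^2,2)$ sits inside $M^*\cup\bigcup_l M_l$ and that each $M_l$ consists entirely of contact-order-$6$ maps; your writeup fills in precisely those steps, using Lemma~\ref{lem:Gat21} for reducedness at the sextactic conics and Proposition~\ref{prop:M5restr_and_section} for the behaviour at $\mathcal C$, as the paper does.
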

    
    
In order to compute the number of sextactic conics, we take the degree of the virtual fundamental class of $\overline M^Y_{({6})}(\mathbb P ^2,2)$, minus the contribution of components of type $M_l$. Let $b_d$ denote the contribution of each of component. The rest of the section is dedicated to the computation of $b_d$.

Let us note that this $M_l$ has a natural virtual fundamental class. Indeed, as the line $l$ is fixed, $M_l$ is isomorphic to the subscheme of $\overline M_{0,1}(\mathbb P^1, 2)$ whose contact order at the marked point is $2$. That is,
\begin{equation}\label{eq:Ml_is_M2}
    M_l\cong \overline M^H_{(2)}(\mathbb P^1, 2).
\end{equation}
We use this equation in the proof of the following lemma. 
\begin{lem}\label{lem:b_d}
    We have $b_d=\frac{3}{4}$ for all $d\ge 3$.
\end{lem}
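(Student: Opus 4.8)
The plan is to compute $b_d$ as an excess intersection contribution, using the recursive relation \eqref{eq:fundamental} localized near a single component $M_l$. Recall from Corollary~\ref{cor:spaceM6} that $\overline M^Y_{(6)}(\mathbb P^2,2)$ is a disjoint union of reduced points (the sextactics) and smooth rational curves $M_l$; since we only want the contribution of one $M_l$, we may work in a neighbourhood of $M_l$ inside $\overline M^Y_{(5)}(\mathbb P^2,2)$ and push forward. The starting point is \eqref{eq:fundamental} with $m=5$, $s=n=2$: since $\deg(Y)\ge 3$ implies $Y$ has no rational curves, the excess term $D_{(5)}(\mathbb P^2,2)$ is the one described in Example~\ref{example:my_example}, and it does not meet the generic point of $M_l$. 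So, locally around $M_l$, $[\overline M^Y_{(6)}(\mathbb P^2,2)]$ equals $(\ev^*(Y)+5\psi)\cap[\overline M^Y_{(5)}(\mathbb P^2,2)]$. By Proposition~\ref{prop:M5} the space $\overline M^Y_{(5)}(\mathbb P^2,2)$ near $M_l$ consists of the curve $M_l$ itself meeting the big component $M^*$ at the point $\mathcal C$; by Proposition~\ref{prop:M5restr_and_section}(1) the virtual class restricted to $M_l$ is the fundamental class $[M_l]$.

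The contribution $b_d$ then splits into two pieces: the part of $(\ev^*(Y)+5\psi)\cap[M_l]$ that is genuinely supported on (the interior of) $M_l$, plus a correction coming from the node $\mathcal C$ where $M_l$ meets $M^*$. For the first piece, I would use the isomorphism \eqref{eq:Ml_is_M2}, $M_l\cong\overline M^H_{(2)}(\mathbb P^1,2)$, which identifies $M_l$ with a curve I can describe explicitly (a degree $2$ cover of $\mathbb P^1$ with a double point constraint; it is a smooth rational curve). On $M_l$, the class $\ev^*(Y)$ restricts to $d$ times $\ev^*(H)$ where $H$ is the point class on the fixed line $l$ — actually $\ev^*(Y)|_{M_l} = \ev^*(\mathcal O_l(d))$, so this is $d\cdot\ev^*(\mathrm{pt})$ computed on $M_l\cong\overline M^H_{(2)}(\mathbb P^1,2)$. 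Combined with the $5\psi$ term, I need $\int_{M_l}(\ev^*(Y)+5\psi)$, which reduces to computing $\int_{\overline M^H_{(2)}(\mathbb P^1,2)}\ev^*(H)$ and $\int_{\overline M^H_{(2)}(\mathbb P^1,2)}\psi$; these are elementary one-dimensional relative GW computations (and the $\ev^*(Y)$ piece, being $\ev^*$ of something proportional to a point on the target $\mathbb P^1$, is actually independent of $d$ because the line is already fixed — the factor $d$ appears but it multiplies a quantity that... here I must be careful, as $b_d$ turns out constant). The claim $b_d=\tfrac34$ being independent of $d$ is a strong hint that the $\ev^*(Y)$-term contributes $0$ on $M_l$ (since the ramification point is already pinned to $p$, imposing another incidence with $Y$ is an empty or excess condition), leaving only $\tfrac{5}{?}$ from $\psi$ plus the node correction — I'd expect $\int_{M_l}\psi$ and the local multiplicity at $\mathcal C$ from Proposition~\ref{prop:M5restr_and_section}(2) to combine to $\tfrac34$.

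Concretely the key steps, in order, are: (i) restrict \eqref{eq:fundamental} with $m=5$ to a neighbourhood of $M_l$, discarding $D_{(5)}$; (ii) use Proposition~\ref{prop:M5restr_and_section}(1) to replace the virtual class on $M_l$ by $[M_l]$; (iii) transport the computation to $\overline M^H_{(2)}(\mathbb P^1,2)$ via \eqref{eq:Ml_is_M2}, identifying this curve explicitly and computing the relevant $\psi$ and $\ev^*$ integrals on it; (iv) separately account for the contribution localized at the node $\mathcal C=M^*\cap M_l$, using that $\sigma$ vanishes to order $1$ there (Proposition~\ref{prop:M5restr_and_section}(2)) — this is the standard excess-intersection bookkeeping when a component of the zero locus meets another stratum; (v) add the pieces and check the $d$-dependence cancels, yielding $b_d=\tfrac34$. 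The main obstacle I anticipate is step (iv): correctly distributing the degree of $(\ev^*(Y)+5\psi)\cap[\overline M^Y_{(5)}(\mathbb P^2,2)]$ between the isolated sextactic points, the interiors of the $M_l$, and the nodes $\mathcal C$, so that $b_d$ is exactly the number one must subtract from $T_{2,2}(d)$ to get $n_d$. This requires being precise about which virtual contributions at $\mathcal C$ belong to ``$M_l$'' versus ``$M^*$'', and it is the crux of why the final answer is the non-integer $\tfrac34$ rather than an integer — the double line $l$ appears with multiplicity, and I expect the clean way to see $\tfrac34$ is to compute $\int_{M_l}(\ev^*(Y)+5\psi)\cap[M_l]$ outright on $\overline M^H_{(2)}(\mathbb P^1,2)$, where the answer is forced by the geometry of degree $2$ covers of $\mathbb P^1$ with a fixed ramification image, independent of $d$.
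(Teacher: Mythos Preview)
Your overall architecture matches the paper's: localize \eqref{eq:fundamental} with $m=5$ near a single $M_l$, split the left-hand side into the contribution from $[M_l]$ itself and the contribution from $[M^*]$ at the meeting point $\mathcal C$, and transport the $M_l$ piece to $\overline M^H_{(2)}(\mathbb P^1,2)$ via \eqref{eq:Ml_is_M2}. Your intuition that the $\ev^*(Y)$ part vanishes on $M_l$ (so only $5\psi$ survives, giving $\tfrac54$) is correct, and the node correction at $\mathcal C$ is indeed $1$, by Proposition~\ref{prop:M5restr_and_section}(2).

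The genuine gap is step (i): you cannot discard $D_{(5)}$. The statement ``$D_{(5)}$ does not meet the generic point of $M_l$'' does not imply ``$D_{(5)}$ contributes nothing when restricted to $M_l$''. In fact, the component of
\[
D(\mathbb P^2,(1,1),(2,3))=\overline M_{0,3}(Y,0)\times_Y\overline M_{(2)}(\mathbb P^2,1)\times_Y\overline M_{(3)}(\mathbb P^2,1)
\]
for which the contact-$3$ line equals $l$ is supported on $M_l$: it corresponds to the unique point of $M_l\cong\overline M^H_{(2)}(\mathbb P^1,2)$ where the relative contact order jumps above $2$ (the degenerate cover with a contracted bubble at $p$). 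The paper computes $[D(\mathbb P^2,(1,1),(2,3))]_{|M_l}=\tfrac12$, hence $3[D]_{|M_l}=\tfrac32$. With your omission you would obtain
\[
b_d \;=\; \underbrace{\tfrac54}_{\int_{M_l}(\ev^*(Y)+5\psi)}\;+\;\underbrace{1}_{\text{node at }\mathcal C}\;=\;\tfrac94,
\]
whereas the correct computation is $b_d = \tfrac54 + 1 - \tfrac32 = \tfrac34$. So the ``non-integer'' you are trying to explain arises from \emph{three} pieces, not two: the $\psi$-integral on $M_l$, the multiplicity-$1$ contact with $M^*$ at $\mathcal C$, \emph{and} the boundary correction $D_{(5)}$ that you dropped.
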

\begin{proof}
    Let us fix an inflectional line $l$. By definition, $b_d$ is the degree of the part of the virtual fundamental class of $\overline M_{(6)}^Y(\mathbb P^2, 2)$ supported on $M_l$. By Example~\ref{example:my_example}, if we restrict Equation~\eqref{eq:fundamental} to $M_l$, we get
    $$
    ((\ev^*(Y)+5\psi)\cdot [\overline M^Y_{(5)}(\mathbb P^2, 2)])_{|M_l}=b_d+3[D(\mathbb P^2, (1,1),(2,3))]_{|M_l}.
    $$
    where
    $$
    D(\mathbb P^2, (1,1),(2,3))=\overline{M}_{0,3}(Y,0)\times_Y \overline{M}^Y_{(2)}(\mathbb P^2, 1)\times_Y \overline{M}^Y_{(3)}(\mathbb P^2, 1).
    $$
    Note that $\overline{M}^Y_{(3)}(\mathbb P^2, 1)$ is zero dimensional, each point is the class of an inflectional line. When restricting to $M_l$, only the component relative to $l$ is not annihilated. Inside $M_l$, $D(\mathbb P^2, (1,1),(2,3))$ is characterized by the fact that it parameterizes the only map with contact order strictly greater than $2$. Using Equation~\eqref{eq:Ml_is_M2}, we get
    \begin{align*}
        [D(\mathbb P^2, (1,1),(2,3))]_{|M_l} &= 
        (\ev^*(Y)+2\psi)_{|M_l} \\
        &= \int_{\overline{M}_{0,1}(\mathbb P^1, 2)} \prod_{k=0}^2 (\ev^*(Y)+k\psi)\\
        &= \frac{1}{2}.
    \end{align*}
    Let $M^*$ denote the component of $\overline M_{(5)}^Y(\mathbb P^2, 2)$ parameterizing birational maps. It is the only component of $\overline M_{(5)}^Y(\mathbb P^2, 2)$, meeting $M_l$, that is not $M_l$ itself. 
    
    By Proposition~\ref{prop:M5restr_and_section}, we have
    $$((\ev^*(Y)+5\psi)\cdot [M^*])_{|M_l}=1,$$
    and, using again Equation~\eqref{eq:Ml_is_M2}, 
    \begin{align*}
        (\ev^*(Y)+5\psi)_{|M_l} &=
        \int_{\overline{M}_{0,1}(\mathbb P^1, 2)} (d\ev^*(H)+5\psi)\cdot \prod_{k=0}^1 (\ev^*(Y)+k\psi)\\
        &=
        \int_{\overline{M}_{0,1}(\mathbb P^1, 2)} 5\ev^*(H)\cdot\psi^2\\
        &= \frac{5}{4}.
    \end{align*}
    This implies that $b_d=1+\frac{5}{4}-\frac{3}{2}=\frac{3}{4}$.
\end{proof}
We are ready to prove the main theorem.
\begin{proof}[Proof of Theorem \ref{thm:main}]
The degree of the virtual class of $\overline M_{(6)}^Y(\mathbb P^2,2)$ is given by the number of sextactic conics $n_d$ plus the contribution from double covers of inflectional lines. By generality, the number of these lines is given by Example~\ref{example:Infl}. Thus
$$
\deg([\overline M_{(6)}^Y(\mathbb{P}^2,2)])=n_d + 3d(d-2)b_d.
$$
Combined with Equation~\eqref{eq:T22} and Lemma~\ref{lem:b_d}, we have $n_d=3d(4d-9)$, as expected.
\end{proof}
\begin{rem}
    When $Y$ is not general, Theorem~\ref{thm:main} is not true even if $Y$ is smooth. Indeed, Lemma~\ref{lem:Gat21} is false in general, so is Corollary~\ref{cor:spaceM6}. Moreover, in Proposition~\ref{prop:M5restr_and_section} we used that, locally around an inflectional point, the equation of $Y$ is $z_2=z_1^3$. This is true because $Y$ is general, otherwise the inflectional point may have greater multiplicity. The case when $Y$ is not general, or even singular, is discussed in \cite{MR1924719,MR4009323}.
\end{rem}
\subsection{Curves of higher degree}
Assuming $\deg(Y)=3$, the multiple cover contribution formula \cite[Proposition~6.1]{MR2667135} computes the contribution from $k$-covers of curves meeting $Y$ with multiplicity $w$,
\begin{equation}\label{eq:BPS}
    \frac{1}{k^2}\binom{k(w-1)-1}{k-1}.
\end{equation}
Lemma~\ref{lem:b_d} coincides with this formula, even without assuming $\deg(Y)=3$. It seems that such a assumption in unnecessary, so we expect that Formula~\eqref{eq:BPS} is still valid if we consider, for example, maximal contact cubics. In this case we have contribution from triple covers of inflectional lines. By Equation~\eqref{eq:T23}, the expected number of such cubics would be
$$
\frac{352}{3}d^2-\frac{812}{3}d-\frac{10}{9}3d(d-2),
$$
so we have the following.
\begin{conj}\label{conj:n3}
    The number of rational cubics with maximal contact with a general smooth plane curve of degree $d\ge3$ is
    \begin{equation}
        N_d = 6d (19d - 44).
    \end{equation}
\end{conj}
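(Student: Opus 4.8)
The plan is to repeat the proof of Theorem~\ref{thm:main} one degree higher. Write $N_d$ for the number of honest rational cubics of maximal contact (order $9$) with a general smooth $Y$ of degree $d$. By the same reasoning as there,
\[
\deg([\overline M^Y_{(9)}(\mathbb P^2,3)])=N_d+3d(d-2)\,b_d,
\]
where $3d(d-2)$ is the number of inflectional lines of $Y$ (Example~\ref{example:Infl}) and $b_d$ is the contribution of a single locus of triple covers of an inflectional line. Since $\deg([\overline M^Y_{(9)}(\mathbb P^2,3)])=\pol_{2,3}(d)=\tfrac{352}{3}d^2-\tfrac{812}{3}d$ by \eqref{eq:T23}, the conjecture is equivalent to the single assertion that $b_d=\tfrac{10}{9}$ for all $d\ge3$; indeed $\tfrac{352}{3}d^2-\tfrac{812}{3}d-\tfrac{10}{9}\cdot 3d(d-2)=114d^2-264d=6d(19d-44)$. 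Note that $\tfrac{10}{9}=\tfrac{1}{3^2}\binom{3(3-1)-1}{3-1}$ is exactly the value of \eqref{eq:BPS} for $k=3$, $w=3$, so the content of the conjecture is that the Gross--Pandharipande--Siebert multiple cover formula keeps governing this contribution even when $(\mathbb P^2,Y)$ is no longer log Calabi--Yau.

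First I would establish the analogues of Lemma~\ref{lem:Gat21} and Corollary~\ref{cor:spaceM6} for $\overline M^Y_{(9)}(\mathbb P^2,3)$. By \cite[Lemma~2.1]{MR2134277} this space is smooth of the expected dimension $9-m$ at every irreducible stable map that is not a multiple cover of a line; a stable map whose image is a line plus a conic, or a union of three lines, cannot attain contact order $9$ at one point for a general $Y$ because the attainable orders violate the bound of \cite[Remark~1.5]{MR2134277} (exactly as in the proof of Proposition~\ref{prop:M5}), so such configurations contribute only through the excess divisors $D_{(m)}$ of \eqref{eq:fundamental}; and if the image is a line $l$ and $f$ is a triple cover, then $f^*(Y)=3\,g^*(p)+g^*(Y\cap l-3p)$ for the induced degree-$3$ map $g\colon\mathbb P^1\to l$, so for a general $Y$ the marked point can carry multiplicity $9$ only when $l$ is an ordinary inflectional line, $p$ its inflection point, and $g$ is totally ramified over $p$ with the marked point over $p$. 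Consequently, for each of the $3d(d-2)$ inflectional lines one obtains a component $M_l\cong\overline M^H_{(3)}(\mathbb P^1,3)$ --- now a surface, in contrast with the curve $M_l$ of the case $n=2$ (compare \eqref{eq:Ml_is_M2}) --- and $\overline M^Y_{(9)}(\mathbb P^2,3)$ is the union of these surfaces with finitely many reduced points, the honest maximal contact cubics.

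Next I would compute $b_d$ along the lines of Lemma~\ref{lem:b_d}: fix an inflectional line $l$ and restrict the recursion \eqref{eq:fundamental} to $M_l$, starting at the first level at which $M_l$ is an irreducible component (one checks that this happens at $m=7$) and iterating up to $m=9$. At each step one identifies the restriction to $M_l$ of the excess term $D_{(m)}(\mathbb P^2,3)$; for $n=3$ these involve the degree splittings $1+1+1$ and $1+2$, so they are built from moduli of covers of $l$ of degree at most $3$ with possibly contracted components, and their restrictions to $M_l$ are again totally ramified covers of $l$ over $p$. Since every marked point of a map in $M_l$ is sent to the fixed inflection point $p$, the class $\ev^*(Y)$ restricts trivially there, the parameter $d$ drops out, and all the intersection numbers reduce to tautological integrals over $\overline M_{0,k}(\mathbb P^1,j)$ with $j\le3$, which are computable by the string, divisor and topological recursion relations as in the Lemma of Section~\ref{section:explicit}. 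The expected outcome is $b_d=\tfrac{10}{9}$, independent of $d$.

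The main obstacle is this last step. For $d=3$ the value $b_d=\tfrac{10}{9}$ is precisely \cite[Proposition~6.1]{MR2667135}, since $(\mathbb P^2,Y)$ is then log Calabi--Yau. For $d>3$ there are two viable routes: either (i) show that $b_d$ is \emph{local}, i.e.\ depends only on the formal neighbourhood of $p$ in $l$ together with the tangency order $w=3$ --- hence equals its value at $d=3$, since at an ordinary flex $(\mathbb P^2,Y)$ looks like $(z_2=z_1^3)$ for every $d$ --- which would call for a degeneration of $Y$ preserving this local model; or (ii) carry the recursion of the previous paragraph through explicitly, which is considerably heavier than Lemma~\ref{lem:b_d} because $M_l$ now has excess dimension $2$ and one passes through the levels $m=7,8,9$ with several excess strata at each. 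The concrete evidence for $b_d=\tfrac{10}{9}$ is that in the case $n=2$ this same recursion, done for a general $d$, already reproduced the value $\tfrac34$ of \eqref{eq:BPS} with no dependence on $d$. A further technical point, needed to make the previous step rigorous, is to exclude any stray component of $\overline M^Y_{(9)}(\mathbb P^2,3)$ --- such as a $6$-fold tangent conic together with a line, or a triple cover with reducible source --- which once more follows from the order bounds of \cite[Remark~1.5]{MR2134277} but requires some bookkeeping.
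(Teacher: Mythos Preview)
The statement is a \emph{conjecture}, and the paper does not prove it. The paper's entire justification is the heuristic you isolate in your first paragraph: observe that Lemma~\ref{lem:b_d} reproduces the value $\tfrac34$ of the Gross--Pandharipande--Siebert formula~\eqref{eq:BPS} for $k=2$, $w=3$ even when $d>3$; \emph{assume} the same formula governs the triple-cover contribution, so that $b_d=\tfrac{1}{9}\binom{5}{2}=\tfrac{10}{9}$; subtract $\tfrac{10}{9}\cdot 3d(d-2)$ from $\pol_{2,3}(d)$ in~\eqref{eq:T23}; and check that the result $N_3=234$ matches the known value from \cite[5.6]{MR4513164}. That is all the paper offers.

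Your proposal reproduces this reduction exactly and then goes well beyond the paper by sketching what an honest proof would require: the structural analysis of $\overline M^Y_{(9)}(\mathbb P^2,3)$ analogous to Proposition~\ref{prop:M5} and Corollary~\ref{cor:spaceM6}, the identification $M_l\cong\overline M^H_{(3)}(\mathbb P^1,3)$, and the two possible routes to $b_d=\tfrac{10}{9}$. You are also candid that neither route is carried out --- the excess-dimension-$2$ recursion through $m=7,8,9$ with multiple degree splittings is substantially harder than Lemma~\ref{lem:b_d}, and the locality/degeneration argument is only indicated. So what you have written is a reasonable research plan, consistent with and more detailed than the paper's own motivation, but it is not a proof; nor does the paper claim one.
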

Note that $N_3=234$ as claimed in \cite[5.6]{MR4513164}. 
For higher degrees, we expect contribution from covers of irreducible curves, but also from reducible curves obtained by combining maximal contact curves of lower degrees.
\bibliographystyle{amsalpha}
\bibliography{refs}





\end{document}